\date{}
\def\a{\alpha}
\def\ra{\rangle}
\def\la{\langle}
\renewcommand{\mlabel}{\label}
\newcommand{\oo}{{\bar{1}}}
\renewcommand{\oe}{{\bar{0}}}
\newcommand{\psu}{\mathop{{\mathfrak{psu}}}\nolimits}
\newcommand{\osp}{\mathop{{\mathfrak{osp}}}\nolimits}
\renewcommand{\str}{\mathop{\mathrm{str}}\nolimits}
\newcommand{\cheis}{\mathfrak{ch}}
\newcommand{\pg}{\mathfrak{pg}}
\newcommand{\psl}{\mathfrak{psl}}
\newcommand{\pq}{\mathfrak{pq}}
\numberwithin{equation}{section}
\def\qed{\hfill$\Box$}
\def\aij{[\fa_i,\fa_j]}
\begin{document}


\title{Finite dimensional compact and\\ unitary Lie superalgebras}
\author{Saeid Azam$^\text{a,}$\thanks{Corresponding author at: Department of Mathematics, University of Isfahan, P.O.Box 81745-163, Isfahan, Iran.}\;,
Karl-Hermann Neeb$^\text{b}$
}
\maketitle
\noindent$^\text{a}$
{\it \small{School of Mathematics, Institute for Research in Fundamental Sciences (IPM), P.O.Box: 19395-5746, Tehran, Iran, and Department of Mathematics, University of Isfahan,
P.O.Box 81745-163, Isfahan, Iran.}}

\noindent$^\text{b}$ {\it \small{Department of Mathematics, Friedrich-Alexander University Erlangen-Nuremberg,
Cauerstrasse 11, 91058, Erlangen, Germany.}}

{\let\thefootnote\relax\footnotetext{E-mail addresses: azam@sci.ui.ac.ir (S. Azam), neeb@math.fau.de (K.-H. Neeb).}}

\begin{abstract}
{Motivated by the theory of unitary representations
of finite dimensional Lie supergroups,
we describe those Lie superalgebras
which have a faithful finite dimensional unitary representation.
We call these Lie superalgebras unitary.
This is achieved by describing the classification of real finite dimensional compact simple
Lie superalgebras, and analyzing, in a rather elementary and direct way,
the decomposition of reductive Lie superalgebras
($\g$ is a semisimple $\g_\oe$-module) over fields of characteristic zero into ideals.\\
{\em Keywords:} unitary, reductive, compact  Lie superalgebras, central extension, spin representation \\
{\em MSC:} 17B20; 22E45.
}
\end{abstract}

\vspace{2mm}
\section{Introduction}
\mlabel{sec:0}

A real Lie algebra $\fk$ is called {\it compact} if it is the
Lie algebra of a compact Lie group (\cite[Sect.~12.1]{HN12}).
There are many characterizations
of compact Lie algebras:

\begin{thm} \mlabel{thm:0.1} For a finite dimensional real Lie algebra $\fk$, the
following are equivalent:
\begin{itemize}
\item[\rm(i)] $\fk$ is compact.
\item[\rm(ii)] $\fk$ carries a positive definite invariant symmetric bilinear
form.
\item[\rm(iii)] $\fk$ has a faithful finite dimensional unitary representation,
i.e., it is isomorphic to a subalgebra of $\fu(n,\C)$ for some $n \in \N$.
\end{itemize}
\end{thm}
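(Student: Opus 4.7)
The plan is to close the logical cycle by establishing (i) $\Rightarrow$ (ii) $\Rightarrow$ (iii) $\Rightarrow$ (ii) $\Rightarrow$ (i); the fact that the cycle passes through (ii) twice already signals that the positive definite invariant form will be the technical pivot around which everything else turns.

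For (i) $\Rightarrow$ (ii) I would fix any compact Lie group $K$ integrating $\fk$ and any positive definite symmetric bilinear form $B_0$ on $\fk$, then define $B(X,Y) := \int_K B_0(\Ad(g)X,\Ad(g)Y)\,dg$ using normalized Haar measure; the result is positive definite, symmetric, and $\Ad(K)$-invariant, whence $\ad(\fk)$-invariant upon differentiating. The reverse direction (iii) $\Rightarrow$ (ii) is even easier: pull back the canonical positive definite invariant form $B(X,Y) := -\tr(XY) = \tr(X^*Y)$ on $\fu(n)$ along the embedding $\fk \hookrightarrow \fu(n)$, where invariance comes from the cyclicity of the trace.

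For (ii) $\Rightarrow$ (iii), the invariance of $B$ forces each $\ad X$ to be $B$-skew-symmetric, so $\ad \colon \fk \to \so(\fk,B)$ is a Lie algebra homomorphism with kernel $\z(\fk)$, and since a real skew-symmetric matrix is skew-Hermitian we have $\so(n) \hookrightarrow \fu(n)$. Since $\ad$ is not faithful, I would combine it with a faithful representation of the abelian center $\z(\fk) \cong \R^d$ --- for instance $(t_1,\ldots,t_d) \mapsto \diag(it_1,\ldots,it_d) \in \fu(d)$ --- and take the direct sum, exploiting the decomposition $\fk = \z(\fk) \oplus [\fk,\fk]$ supplied by the next step.

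The hard part will be (ii) $\Rightarrow$ (i). Given any ideal $\fa \trile \fk$, its $B$-orthogonal complement $\fa^\perp$ is again an ideal and $\fa \cap \fa^\perp$ is totally $B$-isotropic, hence trivial by positive definiteness; iterating, $\fk$ splits orthogonally into minimal ideals, each either a one-dimensional abelian summand (contributing to $\z(\fk)$) or a simple non-abelian ideal $\fs_i$. Because each $\ad X$ is $B$-skew-adjoint, its complex eigenvalues are purely imaginary, so $\kappa(X,X) = \tr((\ad X)^2) \leq 0$; on each simple $\fs_i$ the Killing form $\kappa$ is non-degenerate, which forces it to be negative definite on $\fs_i$. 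Invoking Weyl's classical theorem that a semisimple real Lie algebra with negative definite Killing form integrates to a compact simply connected Lie group $\tilde S_i$, and noting that $\z(\fk) \cong \R^d$ integrates to the torus $\T^d$, the product $K := \T^d \times \prod_i \tilde S_i$ exhibits $\fk$ as the Lie algebra of a compact Lie group. The single non-elementary input I cannot sidestep is precisely Weyl's theorem; every other step reduces to Haar averaging, the cyclicity of trace, or the orthogonal splitting of $B$-invariant ideals.
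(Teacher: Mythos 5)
Your proof is correct; it is the standard argument for this classical theorem (Haar averaging for (i)$\Rightarrow$(ii), pullback of the trace form for (iii)$\Rightarrow$(ii), $B$-orthogonal ideal splitting plus negativity of the Killing form and Weyl's theorem for (ii)$\Rightarrow$(i), and the adjoint representation supplemented by a faithful representation of the centre for (ii)$\Rightarrow$(iii)). The paper itself does not prove Theorem~\ref{thm:0.1} but cites it as background from \cite[Sect.~12.1]{HN12}, which uses essentially this same circle of ideas.
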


In this paper we discuss variants of compactness for finite dimensional
real Lie superalgebras.

\begin{defn} (a) A Lie superalgebra
$\g = \g_\oe \oplus \g_\oo$ over a field $\K$ of characteristic zero
is called {\it reductive}
if $\g$ is a semisimple module for the Lie algebra $\g_\oe$
with respect to the adjoint representation.
It is called {\it semisimple} if it contains no non-zero  solvable ideal.

A simple reductive Lie superalgebra
is called {\it classical} (cf.~\cite{Ka77}). For a Lie superalgebra $\g$
we always assume $\g_\oo \not=\{0\}$, unless otherwise stated.

(b) A real Lie superalgebra $\g$ is called {\it compact} if
$\g_\oe$ is a {\it compactly embedded subalgebra}, i.e.,
the subgroup of $\Aut(\g)$ generated by $e^{\ad \g_\oe}$ has compact closure.
Since this implies that $\g$ is a semisimple $\g_\oe$-module, compact
Lie superalgebras are reductive.

(c) A real Lie superalgebra $\g$ is called {\it unitary} if it has a faithful
unitary representation, i.e., if it is isomorphic to a subalgebra
of some Lie superalgebra $\fu(p|q;\C)$.
\end{defn}

If $\g$ is a real Lie algebra, i.e., $\g_\oo =\{0\}$, then compactness
and unitarity are the same by Theorem~\ref{thm:0.1}.
For Lie superalgebras it is easy to see that unitarity implies compactness,
but that the converse is false in general. The main goal of this paper is
to describe the structure of unitary Lie superalgebras. This is in particular
motivated by the theory of unitary representations
of Lie supergroups (\cite{CCTV06},  \cite{NS11}), for which a thorough understanding
of the finite dimensional unitary Lie superalgebras is a fundamental ingredient.

In our structural analysis of unitary Lie superalgebras we proceed as follows.
In Section~\ref{sec:1} we describe the classification of compact simple
Lie superalgebras (Theorem~\ref{thm:1.4}) and discuss some of their relatives.
There are only $4$ families:
\[ \su(n|m;\C), n > m, \quad
\psu(n|n;\C), n \geq 2, \quad
\pq(n), n > 2, \quad \mbox{ and } \quad
\fc(n), n \geq 2.\]
This classification result is derived from Parker's classification
of real forms of classical Lie superalgebras (\cite{Pa80})
by inspecting which of these real forms are compact.
In Section~\ref{sec:2} we take a first closer look at special properties
of unitary Lie superalgebras. A crucial property of a unitary Lie superalgebra
is the existence of linear functionals $\omega$ on $\g_\oe$ for which the
symmetric bilinear form $(X,Y) \mapsto \omega([X,Y])$ on $\g_\oo$ is positive
definite. This implies in particular that the closed convex cone
$\cC(\g) \subeq \g_\oe$ generated by the elements $[X,X]$, $X \in \g_\oo$,
is pointed and that $\g_\oe$ has non-trivial center (cf.~\cite{NS11}). Although these
requirements fail for some simple compact Lie superalgebras,
they always possess unitary central extensions.

Section~\ref{sec:3} provides the main information on the decomposition
of reductive Lie superalgebras into ideals. Since this analysis goes through without additional problem
over fields of characteristic zero, it is carried out in this context.
Here we focus on the essential case where $\g$ is a reductive Lie superalgebra
satisfying $\g_\oe = [\g_\oo,\g_\oo]$, i.e.,
where $\g$ is generated by its odd part $\g_\oo$. Our main result is the Structure
Theorem~\ref{thm:3.8} which describes the commutator algebra
$[\g,\g]$ as a quotient of a direct sum of the center and
of Lie superalgebras $\g(j)$ and $\fc(k)$ by a central subspace. Here
$\g(j)/\z(\g(j))$ is a classical Lie superalgebra and
$\fc(k)/\fz(\fc(k)) \cong \fk \otimes\Lambda_1$, where
$\fk$ is a simple Lie algebra and $\Lambda_n$ denotes the Gra\ss{}mann algebra
with $n$ generators. The results in this section are closely related to
Elduque's results on reductive Lie superalgebras over fields of characteristic
zero (\cite{El96}; see also \cite{BR78}). In principle we could have derived our results from his, but
our approach is rather elementary, very direct and never requires algebraic closedness
of the base field. In particular it works over
$\R$.
\footnote{For an inductive classification of reductive Lie superalgebras equipped with
an even non-degenerate supersymmetric
$\g$-invariant bilinear {form} see \cite{Be00}.}

In the final Section~\ref{sec:4} everything is put together to
obtain a description of the structure of unitary Lie superalgebras.
Here one of the main results is Theorem~\ref{thm:3.7} which asserts
that if $\g$ is unitary, then the ideals $\g(j)$ are isomorphic to
\[ \su(n|m;\C), \ n \geq m \geq 1, \quad
\fq(n), \ n\geq 2, \quad \mbox{  or } \quad \fc(n), \ n \geq 2.\]
Here the Lie superalgebras of the form $\su(n|m;\C)$, $n > m$, and $\fc(n)$ must be
direct summands, whereas the Lie superalgebras
$\fq(n)$ and $\su(n|n;\C)$ intersect the center non-trivially.
The ideals $\fc(k)$ are direct sums of a central ideal
and a one-dimensional central extension of $\fk \otimes \Lambda_1$, where
$\fk$ is a compact simple Lie algebra. Finally, we show that all these types
occur.

\tableofcontents

\section{Compact classical Lie superalgebras}
\mlabel{sec:1}

In this section we start our analysis
with the classification of compact simple
Lie superalgebras (Theorem~\ref{thm:1.4}).
In passing, we introduce some related extensions
and discuss their matrix realizations. {We recall that a real Lie superalgebra $\g$ is called  compact if
$\g_\oe$ is a compactly embedded subalgebra, i.e.,
the subgroup of $\Aut(\g)$ generated by $e^{\ad \g_\oe}$ has compact closure.}

According to \cite{Ka77}, the classical simple complex Lie superalgebras are
grouped in $6$ infinite series and $3$ exceptional types.
\begin{itemize}
\item  $\begin{cases}
A(n,m) & := \fsl(n+1|m+1,\C)  \text{ for } n > m \geq 0 \\
A(n,n) &:= \psl(n+1|n+1;\C) := \fsl(n+1|n+1,\C)/\C \1 \text{ for } n > 0.
  \end{cases}$
\item $B(n,m) := \osp(2n+1| 2m)$, $n \geq 0$, $m > 0$.
\item $C(n) := \osp(2|2n-2)$, $n \geq 2$.
\item $D(n,m) := \osp(2n|2m)$, $n \geq 2$, $m > 0$.
\item $Q(n) := \tilde Q(n)/\C \1$, $\tilde Q(n) := \Big\{\pmat{ a & b \\ b & a} \in
\fsl(n+1|n+1,\C) \: \tr b =0\Big\}$,
for $n \geq 2$ (the {\it queer Lie superalgebra}).
\item $P(n) :=  \Big\{\pmat{ a & b \\ c & -a^\top} \in \fsl(n+1|n+1,\C) \: b= b^\top,
c^\top = -c\Big\}$, $n > 1$. 
\item $G(3)$, $F(4)$, and $D(2,1,\alpha)$, where $\alpha\in \C\backslash\{0,-1\}$.
\end{itemize}

\begin{rem} \mlabel{rem:1.2}
The even parts of the complex classical {nonexceptional} Lie superalgebras are
\[ \begin{cases}
A(n,m)_\oe &\cong\ \ \fsl(n+1,\C) \oplus \fsl(m+1,\C) \oplus \C \\
A(n,n)_\oe &=\ \  \fsl(n+1,\C)^2
  \end{cases} \]
\and
\[ \osp(n|2m)_\oe \cong \so(n,\C) \oplus \sp(2m,\C), \qquad
Q(n)_\oe \cong \fsl(n+1,\C), \qquad P(n)_\oe \cong \fsl(n+1,\C).\]
In particular, the even part has non-zero (one-dimensional) center for
$A(n,m), n \not=m$ and $C(n)$.

(b) According to \cite{Pa80}, any real form $\g$ of a complex classical Lie superalgebra
$\g_\C$ is determined up to {isomorphism} by its even part $\g_\oe$.

(c) In view of \cite[Thm.~1, \S 2.3.4]{Ka77}, the Cartan--Killing form of
$A(n,n)$ and $Q(n)$ vanishes, whereas the Cartan--Killing form of
$A(n,m)$, $n \not=m$, and of $C(n)$ is non-degenerate. This further implies that
all derivations of $A(n,m)$ and $C(n)$ are inner (\cite[Prop.~2.3.4]{Ka77}). Since
derivations and cocycles are in one-to-one correspondence via the map
\[ \der(\g) \to Z^2(\g,\C), \quad D \mapsto \omega_D, \quad
 \omega_D(x,y) := \kappa(Dx,y) = \str(\ad(Dx) \ad y),\]
where $\kappa$ is the Cartan--Killing form, and
inner derivations correspond to trivial cocycles,
it follows that all central extensions of $A(n,m)$, $n \not=m$, and
$C(n)$ are trivial.
\end{rem}

\begin{exs} \mlabel{ex:1.3} (a) The prototype of a compact Lie superalgebra
is the real form
\[ \g := \fu(p|q;\C) := \Big \{
\pmat{A & B \\ iB^* & D} \in \gl(p|q;\C) \:
A \in \fu(p;\C), D \in \fu(q;\C), B \in M_{p,q}(\C) \Big\}\]
of the complex Lie {super}algebra $\gl(p|q;\C)$.
Then $\g_\oo \cong M_{p,q}(\C)$ and the $\g_\oe$-module structure
on this space is equivalent to $\C^p \otimes \oline{\C^q}$, so that
$\g_\oo$ is a simple $\g_\oe$-module.

An odd element $X \in \gl(p|q;\C)$ is contained in
$\fu(p|q;\C)$ if and only if $X^* = -iX$. Therefore the involution
on $\gl(p|q;\C)$ that leads to the real form
$\fu(p|q;\C)$ can be written as
\[ \sigma(X_0 + X_1) = - X_0^* + i X_1^* \quad \mbox{ for } \quad
X_0 \in \g_{\C,\oe}, X_1 \in \g_{\C,\oo}.\]
For later use we record the following property of $\g$:
\begin{equation}\label{eq:square}
 [X,X] = 2 X^2 = 2i X^*X \not=0  \quad \mbox{ for } \quad 0 \not= X \in \g_\oo.
\end{equation}

(b)  The Lie superalgebra $\g = \su(n|n;\C) := \{ X \in \fu(n|n;\C) \: \str X = 0\}$
has the non-zero center
$\fz = \R i\1 = \fz(\g_\oe)$ and $\g_{\bar 0} \cong \fz \oplus \su(n,\C)^2$, so that
$\fz$ acts trivially on $\g_{\bar 1}$. Since $M_n(\C) \cong \C^n \otimes \oline{\C^n}$
is a simple module of $\g_\oe$,
we see that for $\g$ and also for the simple
Lie superalgebra $\psu(n|n;\C) = \g/\z(\g)$, the representation of $\g_\oe$ on
$\g_\oo$ is irreducible.

We also note that
\[ i\1 = X^2 = \shalf [X,X] \quad \mbox{ for } \quad X := \pmat{0 & \1 \\ i\1 & 0} \in \g_{\bar 1},\]
shows that $\fz = \fz(\g) \subeq [\g,\g]$, that is,
$\g$ is a non-trivial central extension of $\psu(n|n;\C) := \g/\fz(\g)$.
Since the {center of the even part} of this quotient is trivial, it is not unitary
(cf.~Lemma~\ref{lem:form} below);
all unitary representations $\pi$ of $\g$ with $\pi(i\1) = 0$
kill the element $X$, hence are trivial
because the complexification $\psl(n|n;\C)$ of $\psu(n|n;\C)$ is a simple complex Lie superalgebra
(see also~\cite{Ja94}).

For $n \not=m$, the center $\fz$ of $\su(n|m;\C)_\oe$ is non-zero and acts non-trivially
on $\su(n|m;\C)_\oo$.

(c) For $n \geq 0$, the Lie superalgebra
\[ \fq(n) := \tilde Q(n) \cap \su(n+1|n+1;\C)
= \Big\{\pmat{ a & b \\ b & a}  \: a^* = -a, b^* = -i b, \tr b = 0\Big\},\]
is a compact real form of $\tilde Q(n)$. Accordingly, $\pq(n) := \fq(n)/\R i \1$
is a compact real form of the Lie superalgebra $Q(n)$, which is simple for
$n \geq 1$. Note that
\[ \{ b \in M_{n+1}(\C) \: b^* = -i b\} = (1-i) \fu(n+1,\C) \]
shows that the module structure for $\fq(n)_\oe
\cong \fu(n+1,\C)$ on $\fq(n)_\oo$ is equivalent
to the adjoint representation on $\su(n+1,\C)$, hence irreducible.

We also note that $\fz(\fq(n)_\oe) = \{0\}$,
that {$\fq(n)=[\fq(n),\fq(n)]$} and that it {embeds} naturally as a hyperplane ideal into
\[ \hat\fq(n) :=  \Big \{
\pmat{a & b \\ b & a} \in \gl(n+1|n+1;\C) \:
a^* = -a, b^* = -i b \Big\}.\]
Therefore
$D := \ad \pmat{0 & \1 \\ i\1 & 0}$
defines an exterior odd derivation of $\fq(n)$ with $D^2 = 0$.

(d) Let $\fc(n)$ be the compact real form of $C(n) = \osp(2|2n-2)$, $n \geq 2$, with
\[ \fc(n)_\oe
\cong \so(2,\R) \oplus \sp(n-1) \cong \R \oplus \fu(n-1;\H) \]
and
\[ \fc(n)_\oo \cong \bH^{n-1} \quad \mbox{ with}  \quad
(\fc(n)_\oo)_\C  \cong M_2(\C)^{n-1}\cong \C^2 \otimes \C^{2(n-1)}.\]
As $\H^{n-1}$ is an irreducible $\sp(n-1)$-module, the action of $\fc(n)_\oe$
on $\fc(n)_\oo$ is irreducible.

We also note that $\fz(\fc(n)_\oe) \cong \so(2,\R) \not=\{0\}$.
The Cartan--Killing form of $\fc(n)$ is non-zero, hence non-degenerate, and this implies that
all derivations are inner and that all central extensions are trivial
(cf.\ \cite[Prop.~2.3.4]{Ka77}).
\end{exs}

\begin{thm} \mlabel{thm:1.4} {\rm(Compact classical Lie superalgebras)}
Every compact simple real Lie superalgebra $\g$ is isomorphic to one of the following:
\begin{itemize}
\item[\rm(a)] $\su(n|m;\C)$ for $n > m \geq 1$.
\item[\rm(b)] $\psu(n|n;\C)$ for $n \geq 2$.
\item[\rm(c)] $\pq(n) = {\fq(n)/i\R \1}$, $n \geq 2$.
\item[\rm(d)] $\fc(n)$, the compact real form of $C(n) = \osp(2|2n-2)$, $n \geq 2$.
\end{itemize}
For all these simple real Lie
superalgebras $\g$, the $\g_{\oe}$-module $\g_\oo$ is irreducible,
$\z(\g_\oe) =\{0\}$ for ${\psu(n|n;\C)}$ and {$\pq(n)$}, and
$\z(\g_\oe) \cong \R$ for $\su(n|m;\C), n > m$ and $\fc(n)$.
\end{thm}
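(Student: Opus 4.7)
The plan is to combine Kac's classification of complex classical simple Lie superalgebras with Parker's theorem (Remark~\ref{rem:1.2}(b)), which asserts that a real form of such a superalgebra is determined up to isomorphism by its even part, and then to inspect which real forms are compact.

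First, I observe that if $\g$ is compact and simple, then $\g_\oe$ is a compact Lie algebra and $\g$ is reductive, so $\g_\C$ is either complex classical simple --- with $\g$ a real form --- or of the form $\fs \oplus \fs$ with $\g$ the realification of a complex classical simple superalgebra $\fs$. The realification case is immediately excluded, since the realification $(\fs_\oe)_\R$ of a complex semisimple Lie algebra contains elements $iH$, with $H$ in a Cartan subalgebra, whose adjoint has real nonzero eigenvalues; thus $e^{\ad \g_\oe}$ generates a noncompact subgroup of $\Aut(\g)$. Hence $\g_\C$ belongs to Kac's list $A(n,m)$, $B(n,m)$, $C(n)$, $D(n,m)$, $P(n)$, $Q(n)$, $F(4)$, $G(3)$, $D(2,1;\alpha)$.

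Next, I go through this list, using Parker's theorem to reduce the existence of a compact real form to the existence of a compact real form of the even part together with a compatible extension to $\g_\C$. The four families claimed in the theorem are explicitly built in Examples~\ref{ex:1.3}, realising respectively $A(n-1,m-1)$ for $n>m\ge 1$, $A(n-1,n-1)$, $Q(n)$, and $C(n)$. To exclude $B(n,m) = \osp(2n+1|2m)$ and $D(n,m) = \osp(2n|2m)$ with $n\geq 2$, I consult Parker's enumeration: every real form has even part of the shape $\so(p,q) \oplus \sp(2m,\R)$ or $\so^*(2n) \oplus \usp(p,q)$, and in either case a noncompact simple summand appears. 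The superalgebras $P(n)$, $F(4)$, $G(3)$, and $D(2,1;\alpha)$ are disposed of similarly: in each case Parker's tables show that no real form has compact even part.

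Finally, the irreducibility of $\g_\oo$ as a $\g_\oe$-module and the computation of $\z(\g_\oe)$ (trivial for $\psu(n|n;\C)$ and $\pq(n)$, one-dimensional for $\su(n|m;\C)$ with $n>m$ and for $\fc(n)$) follow directly from the matrix realisations in Examples~\ref{ex:1.3}. The main obstacle in this programme is the exclusion step of the previous paragraph: although the mechanism is uniform --- a noncompact simple summand such as $\sp(2m,\R)$, $\so^*(2n)$, or $\fsl(n,\R)$ appears in $\g_\oe$ --- the verification proceeds family by family and rests essentially on Parker's tables of real forms.
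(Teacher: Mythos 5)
Your proposal is correct and follows essentially the same route as the paper: reduce to Kac's list of classical simple complex Lie superalgebras, exclude the realification case (the paper argues that a complex subalgebra $\ad\g_\oe$ of $\der(\g)$ can only generate a relatively compact group if it is trivial, which is the same point as your $iH$-has-real-eigenvalues argument), invoke Parker's classification of real forms, and check family by family that only $A(n,m)$, $Q(n)$, $C(n)$ admit a compact even part while $B$, $D$, $P$, and the exceptional types always carry a noncompact simple summand such as $\sp(2m,\R)$ or $\so^*(2n)$.
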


\proof The compactness of $\g$ implies that it is classical.
If $\g_\C$ is not simple, then $\g$ is a complex simple Lie superalgebra, considered as  real
one. Then $\ad \g_\oe$ is a complex subalgebra of $\der(\g)$ and this can only
generate a relatively compact group of automorphisms if it is trivial, i.e.,
$\g_\oe \subeq \fz(\g)$, but this contradicts the simplicity of $\g$.

Therefore $\g_\C$ is simple and $\g$ is a compact real form of the classical
Lie superalgebra $\g_\C$. From the classification
of the real forms in \cite[Thm.~2.5]{Pa80} it follows that
compact real forms are unique whenever they exist.
We also see that the exceptional algebras and the algebras
$P(n)$ have no compact real forms because
the Lie algebras $\fsl(2,\R)$, $\so(p,q)$, $pq > 1$,
and $\su^*(n)$, $n > 0$, are not compact.
Further, the algebras $B(n,m)$, $m > 0$,
and $D(n,m)$ have no compact real form because
the Lie algebras $\so^*(2n)$, $n > 1$,\begin{footnote}
{In $\so^*(2n) = \so(2n,\C) \cap \fsl(n,\H)$ the subalgebra $\fu(n,\C)$ is maximally
compact and equality implies $n = 1$.}
\end{footnote}
are never compact.

For $A(n,m)$, $Q(n)$ and $C(n)$, the compact real forms are given by {(a), (b), (c) and
(d)} respectively. The simplicity of the $\g_\oe$-module $\g_\oo$ follows
from the discussion in Examples~\ref{ex:1.3}, which also implies
the remaining assertion on $\fz(\g_\oe)$.
\qed

\begin{rem}  In \cite{F13}, Fioresi constructs compact real forms
in terms of the root decomposition. Fixing a Cartan subalgebra
and a Chevalley basis  $\{H_j,X_\a\}$ (see \cite[Chapter 3]{FG12}),
she thus obtains for any classical Lie {superalgebra} satisfying
$[X_\a,X_{\a}]=0$ for all $\a\in\Delta$ a compact real form.
\end{rem}

\section{Unitary Lie superalgebras}
\mlabel{sec:2}

In this section we take a closer look at the class of unitary
Lie superalgebras and some of their specific properties.

\begin{defn}\label{1new} A {(finite dimensional)} {\it unitary representation} for a finite dimensional Lie
superalgebra $\fg$ is a pair $(\cH, \rho)$ where $\cH = \cH_\oe \oplus \cH_\oo$
is a {finite dimensional} complex super Hilbert space with the corresponding sesquilinear positive definite even hermitian form
$\la\cdot,\cdot\ra:\cH\times\cH\to \C$, and $\rho:\fg\to \End_\C(\cH)$ is a (real) Lie superalgebra
homomorphism satisfying
\begin{equation}\label{new1}
\la \rho(X)v,w\ra=\la v,-i^{|X|}\rho(X)w\ra,
\end{equation}
for $X\in\fg$ and $v,w\in\cH$.
If we write
\[ \fu(\cH_\oe | \cH_\oo) :=
\Big\{ \pmat{ a & b \\ ib^* &  d} \in \gl(\cH) \:
a \in \fu(\cH_\oe), d \in \fu(\cH_\oo), b \in \Hom(\cH_\oo, \cH_\oe)\Big\}, \]
then unitary representations correspond to homomorphism of Lie superalgebras
\break $\rho \: \g \to \fu(\cH_\oe | \cH_\oo)$.
\end{defn}

Since we can form direct sums of unitary representations, we
have the following trivial lemma:

\begin{lem}\label{n1} For a finite dimensional Lie superalgebra, the
following are equivalent:
\begin{itemize}
\item[\rm(i)] The finite dimensional unitary representations
of $\g$ separate the points.
\item[\rm(ii)] $\g$ has a faithful finite dimensional unitary representation.
\item[\rm(iii)] $\g$ is isomorphic to a subalgebra of some
$\fu(p|q;\C)$.
\end{itemize}
\end{lem}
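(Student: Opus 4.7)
The plan is to verify the three implications (iii)$\Rightarrow$(ii)$\Rightarrow$(i)$\Rightarrow$(iii), all of which are essentially formal once one observes that the category of finite dimensional unitary representations is closed under finite direct sums.

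First I would deal with (iii)$\Leftrightarrow$(ii), which is a matter of unwinding Definition~\ref{1new}. An embedding $\g\hookrightarrow \fu(p|q;\C)$ is, by the very definition of the target, a homomorphism of Lie superalgebras satisfying \eqref{new1} on the standard super Hilbert space $\C^{p|q}$, hence a faithful unitary representation; conversely, a faithful unitary representation on $\cH=\cH_\oe\oplus\cH_\oo$ with $p=\dim\cH_\oe$, $q=\dim\cH_\oo$ is exactly an injection into $\fu(p|q;\C)$ after fixing orthonormal bases respecting the $\Z/2$-grading. The implication (ii)$\Rightarrow$(i) is immediate: a single faithful representation already separates points.

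The only substantive step is (i)$\Rightarrow$(ii). Given a separating family $\{(\cH_\a,\rho_\a)\}_{\a\in A}$ of finite dimensional unitary representations, the kernels $\ker\rho_\a$ are graded ideals of $\g$ whose intersection is $\{0\}$. Since $\g$ is finite dimensional, one may select a finite subset $\a_1,\dots,\a_N\in A$ with $\bigcap_{j=1}^N \ker\rho_{\a_j}=\{0\}$: indeed, starting from any $\a_1$ and iteratively choosing $\a_{j+1}$ so that $\ker\rho_{\a_{j+1}}$ does not contain the current intersection, dimensions strictly decrease and the process terminates. Then the direct sum
\[
\rho:=\rho_{\a_1}\oplus\cdots\oplus\rho_{\a_N}\:\g\to \End_\C(\cH_{\a_1}\oplus\cdots\oplus\cH_{\a_N})
\]
is a faithful homomorphism of Lie superalgebras, and one checks that it is again unitary: the orthogonal direct sum of the $\cH_{\a_j}$ is a super Hilbert space with the diagonal grading, $\rho$ preserves parity because each $\rho_{\a_j}$ does, and condition \eqref{new1} holds summand by summand and therefore on the whole direct sum.

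No real obstacle arises; the only point worth a sentence is that ``unitary'' is preserved under finite direct sums, which is what turns the separation property into a single faithful representation. Combined with (iii)$\Leftrightarrow$(ii), this closes the loop and establishes the equivalence.
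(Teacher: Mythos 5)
Your proof is correct and follows the same route the paper intends: the paper presents this as a "trivial lemma" immediately after remarking that unitary representations are closed under finite direct sums, which is precisely the key observation you isolate in (i)$\Rightarrow$(ii), together with the finite-dimensionality argument for extracting a finite separating subfamily.
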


\begin{defn}\label{unitary} (a) We call a finite dimensional real Lie superalgebra
$\g$ {\it unitary} if it has a faithful finite dimensional unitary representation.

(b)  For a finite dimensional Lie superalgebra $\g$, we write
$\cC(\g)  \subeq \g_{\bar 0}$ for the closed convex cone generated by the elements of the form
$[X,X]$, $X \in \g_{\bar 1}$.
\end{defn}

\begin{lem} \mlabel{lem:form}
If $\g$ is unitary then the following assertions hold:
\begin{itemize}
\item[\rm(i)] $\g$ is compact, i.e., the even part $\g_\oe$ is compactly embedded in $\g$.
\item[\rm(ii)] $[X,X] \not=0$ for $0 \not=X \in \g_\oo$.
\item[\rm(iii)] $\cC(\g)$ is pointed, i.e., $\cC(\g) \cap - \cC(\g) = \{0\}$.
\item[\rm(iv)] There exists a $\g_0$-invariant linear functional $\omega \in \g_\oe^*$,
for which the form
\[ \kappa_\omega \: \g_\oo \times \g_\oo \to \R, \quad
\kappa_\omega(X_1, Y_1) := \omega([X_1,Y_1]) \]
is positive definite.
\item[\rm(v)] $\fz(\g_\oe) \not=\{0\}$.
\end{itemize}
\end{lem}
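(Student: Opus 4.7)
The plan is to exploit the faithful embedding $\rho \: \g \hookrightarrow \fu(p|q;\C)$ together with the fundamental identity \eqref{eq:square}, namely $[X,X] = 2iX^*X$ for odd $X$. For (i), I note that $\rho(\g_\oe)$ sits inside the compact reductive Lie algebra $\fu(p;\C) \oplus \fu(q;\C)$, so $\g_\oe$ is itself a compact Lie algebra and therefore reductive, in particular $\g_\oe = \fz(\g_\oe) \oplus [\g_\oe,\g_\oe]$; since $\g$ is then a finite-dimensional module for the compact Lie algebra $\g_\oe$, averaging produces a $\g_\oe$-invariant inner product on $\g$, placing $\ad(\g_\oe)$ inside $\so(\g)$ and making $\overline{\langle e^{\ad \g_\oe}\rangle}$ compact in $\Aut(\g)$. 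For (ii), faithfulness forces $\rho(X) \ne 0$ whenever $0 \ne X \in \g_\oo$, so $\rho([X,X]) = 2i\rho(X)^*\rho(X) \ne 0$ and hence $[X,X] \ne 0$.

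The core of the argument is the construction of the functional $\omega$ in (iv). I would set
\[
\omega(A) := -i \Tr \rho(A), \qquad A \in \g_\oe,
\]
which is real-valued because $\rho(A)$ is skew-Hermitian, and $\g_\oe$-invariant because the trace vanishes on commutators. Writing $\rho(X) = \bigl(\begin{smallmatrix} 0 & B \\ iB^* & 0 \end{smallmatrix}\bigr)$ for $X \in \g_\oo$ and applying \eqref{eq:square}, a direct matrix computation yields
\[
\kappa_\omega(X,X) = -i\Tr\bigl(2i\rho(X)^*\rho(X)\bigr) = 4\Tr(B^*B),
\]
which is strictly positive unless $B = 0$, i.e., unless $X = 0$ by faithfulness, proving (iv). From this, both (iii) and (v) cascade. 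For (iii), $\rho([X,X])$ lies in the pointed convex cone $\{(2iH_1, 2iH_2) : H_j \ge 0\}$ inside $\fu(p;\C) \oplus \fu(q;\C)$, so $\rho(\cC(\g))$ is contained in a pointed cone, and injectivity of $\rho|_{\g_\oe}$ transfers pointedness to $\cC(\g)$. For (v), $\omega$ is nonzero on $\g_\oe$ since $\g_\oo \ne \{0\}$ by convention and $\kappa_\omega$ is positive definite, yet vanishes on $[\g_\oe,\g_\oe]$ by invariance, so under the decomposition $\g_\oe = \fz(\g_\oe) \oplus [\g_\oe,\g_\oe]$ established in (i) it must restrict nontrivially to $\fz(\g_\oe)$.

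The main obstacle---really the only insight required---is writing down the single functional $\omega = -i\Tr\circ\rho$ and recognizing that it simultaneously supplies the positive form for (iv), a separating functional for the cone in (iii), and a witness for non-triviality of the center in (v). Once this is in hand, everything reduces to routine matrix computations in $\fu(p|q;\C)$ and the standard fact that compact Lie algebras admit a reductive decomposition into center and derived algebra.
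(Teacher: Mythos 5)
Your proofs of (i)--(iii) and (v) follow essentially the same lines as the paper. The genuinely different contribution is your treatment of (iv): the paper obtains $\omega$ by an abstract fixed-point argument --- pointedness of $\cC(\g)$ gives $\cC(\g)^\star$ non-empty interior, and the compact closure of $e^{\ad\g_\oe}$ fixes a point in $\Int(\cC(\g)^\star)$ (citing \cite[Lemma 5.1.2]{NS11}) --- whereas you simply write down $\omega = -i\,\Tr\circ\rho$ and compute $\omega([X,X]) = 4\Tr(B^*B) > 0$ for $\rho(X)\ne 0$. This is a genuine and attractive simplification: it eliminates the cone-theoretic/fixed-point machinery, produces $\omega$ completely explicitly, and the $\g_\oe$-invariance of $\kappa_\omega$ (required by the statement) drops out immediately since $\omega$ vanishes on $[\g_\oe,\g_\oe]$. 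Your observation that this single functional simultaneously feeds (iii), (iv), and (v) is correct, although (iii) does not really need $\omega$ --- the direct containment of $\rho(\cC(\g))$ in the pointed cone of positive semidefinite elements, which you also give, is the paper's actual argument and is what you use in the end.

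There is one imprecision in (i). The claim ``$\g$ is a finite-dimensional module for the compact Lie algebra $\g_\oe$, so averaging produces a $\g_\oe$-invariant inner product on $\g$'' is not valid as a general principle: a finite-dimensional module over a compact Lie algebra need not admit an invariant inner product (the abelian Lie algebra $\R$ is compact, yet it acts on $\R^2$ by a nonzero nilpotent matrix with no invariant inner product). What saves the argument is precisely the embedding $\g \subseteq \fh := \fu(p|q;\C)$: the superalgebra $\fh$ carries an $\fh_\oe$-invariant inner product (equivalently, $\fh_\oe$ is compactly embedded in $\fh$, which is the paper's phrasing), and its restriction to $\g$ is the desired $\g_\oe$-invariant inner product, showing $\ad_\g(\g_\oe)\subeq\so(\g)$. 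You should make this source of the inner product explicit rather than appealing to an ``averaging'' over an unspecified compact group.
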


\proof (i)-(iii) We may assume that $\g \subeq \fh := \fu(p|q;\C)$.
Then (i) follows from the fact that $\fh_\oe$ is compactly embedded in $\fh$,
and (ii) from \eqref{eq:square}. Further \eqref{eq:square} implies that
\[ \cC(\g) \subeq \cC(\fh) \subeq \{ X \in \fu(p,\C) \oplus \fu(q,\C) \:
\Spec(-i X) \geq 0\},\]
which implies that $\cC(\fg)$ is pointed.

(iv) Since the cone $\cC(\g)$ is pointed, its dual cone
$\cC(\g)^\star \subeq \g_\oe^*$ has interior points.
Since the group $K$ generated by $e^{\ad \g_\oe}$ has compact closure by (i),
$\Int(\cC(\g)^\star)$ contains a fixed point $\omega$ for this group
(cf.\ \cite[Lemma 5.1.2]{NS11}).
Then $\omega(Y) > 0$ for any non-zero element $Y\in \cC(\g)$, and therefore
(ii), (iii) imply that $\kappa_\omega$ is positive definite on $\g_\oo$.
Further, the invariance of $\omega$ under $K$, and hence under $\g_\oe$,
implies that $\kappa_\omega$ is $\g_\oe$-invariant.

(v) Since $\omega$ is $\g_{\oe}$-invariant, it vanishes on $[\g_\oe,\g_\oe]$,
so that $\g_\oe = \fz(\g_\oe) \oplus [\g_\oe, \g_\oe]$ implies that the center is
non-zero.
\qed

Combining Lemma~\ref{lem:form}(v) with Theorem~\ref{thm:1.4}, we obtain immediately:

\begin{prop} \mlabel{prop:simp-nonunit}
The simple compact Lie superalgebras
$\psu(n|n;\C)$, $n > 0$, and $\pq(n)$, $n \geq 2$, are not unitary.
\end{prop}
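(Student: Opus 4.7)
The plan is to apply Lemma~\ref{lem:form}(v) as a direct obstruction, using the center computation already packaged in Theorem~\ref{thm:1.4}. Suppose, toward a contradiction, that $\g$ is one of $\psu(n|n;\C)$ ($n \geq 2$) or $\pq(n)$ ($n \geq 2$) and that $\g$ is unitary. By Lemma~\ref{lem:form}(v), this forces $\fz(\g_\oe) \neq \{0\}$. But Theorem~\ref{thm:1.4} explicitly records that for precisely these two families the even part has trivial center, $\fz(\g_\oe) = \{0\}$, which is the sought contradiction.

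To make the argument self-contained I would briefly recall why $\fz(\g_\oe) = \{0\}$ in these two cases, since the statement in Theorem~\ref{thm:1.4} is what drives everything. For $\psu(n|n;\C) = \su(n|n;\C)/\R i\1$, the even part of $\su(n|n;\C)$ is $\R i\1 \oplus \su(n,\C)^2$, so modding out $\R i\1$ leaves $\su(n,\C) \oplus \su(n,\C)$, whose center is trivial; the case of $\pq(n) = \fq(n)/\R i\1$ is analogous, the even part becoming $\psu(n+1,\C) \cong \fu(n+1,\C)/\R i\1$, again with trivial center. These were already noted in Examples~\ref{ex:1.3}(b),(c).

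Chasing the logic backwards, the point of Lemma~\ref{lem:form}(v) is that a unitary representation produces a $\g_\oe$-invariant functional $\omega$ with $\omega([X_1,Y_1])$ positive definite on $\g_\oo$; invariance forces $\omega$ to vanish on $[\g_\oe,\g_\oe]$ while being nonzero, so the complement $\fz(\g_\oe)$ in the reductive decomposition $\g_\oe = \fz(\g_\oe) \oplus [\g_\oe,\g_\oe]$ must be nontrivial. For $\psu(n|n;\C)$ and $\pq(n)$ this complement is zero and no such $\omega$ exists, hence no faithful unitary representation.

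The main obstacle is essentially bookkeeping rather than a genuine difficulty: one only has to verify that the two algebras really lie in the "trivial center of $\g_\oe$" branch of Theorem~\ref{thm:1.4}. No further analysis is needed, and the proof is a two-line application of already established structural results.
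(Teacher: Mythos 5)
Your proof is correct and follows exactly the paper's own argument: the paper obtains the proposition by combining Lemma~\ref{lem:form}(v) with the center computation in Theorem~\ref{thm:1.4}, which is precisely what you do. The extra unwinding of why $\fz(\g_\oe)=\{0\}$ in the two cases and why Lemma~\ref{lem:form}(v) holds is accurate and harmless, just not needed.
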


\begin{rem}\label{new3}
(a) More examples of compact Lie superalgebras which are not unitary
can be obtained as follows.
A Lie superalgebra $\g$ is called a {\it Clifford--Heisenberg Lie superalgebra}
if $\g_\oe \subeq \z(\g)$. Then the bracket is completely determined by the
symmetric bilinear form
\[ \beta\: \g_{\bar 1} \times\g_{\bar 1} \to \g_\oe, \quad
(X,Y) \mapsto [X,Y]. \]
If $\dim \g_\oe = 1$ and $\beta$ is indefinite, then $\cC(\g) = \g_\oe$ is
not pointed, so that $\g$ is not unitary.

In view of Lemma~\ref{lem:form}, unitarity even requires that $\beta$
is positive or negative definite.
The spin representation associated to a
positive definite form $\beta$ implies that
$\g$ is unitary if $\beta$ is positive definite
(cf.\ Subsection~\ref{subsec:2.1})

(b) If $\g$ is a compact Lie superalgebra,
then $\g_{\bar 0}$ is in particular
a compact Lie algebra (\cite[Prop.~12.1.4]{HN12}). Hence the ideal
$[\g_{\bar 1},\g_{\bar 1}]$ of $\g_{\bar 0}$ has a complement
$\fc$, and
\[ \g \cong ([\g_{\bar 1},\g_{\bar 1}] \oplus \g_{\bar 1}) \rtimes \fc \quad \mbox{ with } \quad
[\fc, {[\g_\oo, \g_\oo]}] = \{0\}.\]

If, conversely, $\fc$ is a compact Lie algebra and $\fh$ a compact
Lie superalgebra on which $\fc$ acts by even derivations
$\alpha \: \fc \to \der(\fh)_\oe$ with
$\alpha(\fc)(\fh_\oe) = \{0\}$ and
$e^{\alpha(\fc)_\oo} \subeq \GL(\fh_\oo)$ relatively compact, then
$\fh \rtimes \fc$ is a compact Lie superalgebra.
Therefore, for most issues concerning the structure theory of compact Lie superalgebras,
it suffices to assume that $\g$ is generated by $\g_\oo$.
\end{rem}


\subsection{The spin representation}
\mlabel{subsec:2.1}

Let $V$ and $\cH$ be finite dimensional complex Hilbert spaces.
A representation of the {\it canonical anticommutation relations (CAR)}
is an antilinear map $a \: V \to B(\cH)$ satisfying
\[  a(f)a(g) + a(g) a(f) = 0 \quad \mbox{ and } \quad
a(f)a(g)^* + a(g)^* a(f) = \la g,f \ra \1 \quad \mbox{ for } \quad f,g \in V.\]

For every complex Hilbert space $V$, we obtain a representation of
the CAR on the exterior algebra $\Lambda(V) = \bigoplus_{k = 0}^\infty \Lambda^k(V)$,
endowed with the natural scalar product, by
\[ a_0(f)^*(f_1 \wedge \cdots \wedge f_n) = f \wedge f_1 \wedge \cdots \wedge f_n\]
and
\[ a_0(f)(f_1 \wedge \cdots \wedge f_n) =\sum_{j = 1}^n (-1)^{j+1}
\la f_j, f \ra f_1 \wedge \cdots\wedge \hat{f_j} \wedge\cdots \wedge f_n\]
(cf.\ \cite{Ot95}).

The complex {\it Clifford--Heisenberg Lie superalgebra} associated to the complex
Hilbert space $V$ is the Lie superalgebra
\[ \cheis(V) := \C \oplus V \oplus \oline V, \quad
[(z,v,w), (z',v',w')] := (\la v, w'\ra + \la v',w\ra,0,0).\]
Any representation of the CAR leads to a complex linear representation of
$\cheis(V)$ by
\[ \rho(z,v,w) := z \1 + a(v)^* + a(w).\]
Accordingly, we obtain a real form of $\cheis(V)$ by
\[ \fh := \{ (zi,v,-iv) \: z\in \R, v \in V \}\]
and any representation of the CAR defines a unitary representation of $\fh$.
For $(0,v,-iv) \in \fh$, we have
\[ [(0,v{,-iv}), (0,v{,-iv})]
= (\la v, -iv \ra + \la v,-iv \ra, 0,0)
= (2i\|v\|^2, 0,0).\]

We now consider the Lie superalgebra
\[ \g := \fh \rtimes_D \R, \quad D(iz,v,-iv) := (0,iv, v), \quad z \in \R, v \in V.\]
Then $d := (0,1) \in \fz(\g_{\oline 0})$ acts non-trivially on $\g_{\oline 1}$.
The Fock representation of the CAR extends to $\g$ by
\[ \rho(d)(f_1 \wedge \cdots \wedge f_n) := in f_1 \wedge \cdots \wedge f_n.\]
We thus obtain for $\dim V = n$ a unitary irreducible representation
of $\g$ satisfying $\Spec(-i\rho(d)) = \{0,\ldots, n\}$. {For a detailed study of spin representations see
\cite{Va04}.}

\subsection{Central extensions}

In this subsection we discuss some aspects of central
extensions of real Lie superalgebras that will be needed in
Section~\ref{sec:4} below.

\begin{lem} If $\beta \: \g_{\oo} \times \g_{\oo} \to \R$ is a
$\g_\oe$-invariant symmetric bilinear form, i.e.,
\[ \beta([X,Y],Z) + \beta(Y,[X,Z]) = 0 \quad \mbox{ for} \quad
X \in \g_\oe, Y,Z \in \g_\oo, \]
then
\begin{equation}
  \label{eq:cocyc}
 \omega(X_0 + X_1, Y_0 + Y_1) := \beta(X_1, Y_1)
\end{equation}
is a Lie superalgebra  $2$-cocycle, i.e.,
\[ \hat\g := \R \oplus \g \quad \hbox{ with }
\quad [(z,X), (z', X')] := (\omega(X,X'), [X,X']) \]
is a Lie superalgebra with $\hat\g_\oe = \R \oplus \g_\oe$ and
$\hat\g_\oo = \g_\oo$.
\end{lem}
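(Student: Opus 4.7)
The plan is to verify that $\omega$, as defined in \eqref{eq:cocyc}, satisfies the three conditions required of a Lie superalgebra $2$-cocycle with values in the trivial even module $\R$: (i) it has even parity, (ii) it is super-symmetric in the appropriate sense, and (iii) it satisfies the super-Jacobi cocycle identity
\[
(-1)^{|X||Z|}\omega(X,[Y,Z]) + (-1)^{|Y||X|}\omega(Y,[Z,X]) + (-1)^{|Z||Y|}\omega(Z,[X,Y]) = 0
\]
for all homogeneous $X,Y,Z \in \g$. Once these are established, the bracket on $\hat\g = \R\oplus\g$ given in the statement will automatically be super-skew-symmetric and satisfy the super-Jacobi identity, and $\R$ will lie centrally in $\hat\g_\oe$.

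First, I would observe that $\omega$ vanishes unless both arguments are odd, and hence $\omega(X,Y)=0$ whenever $|X|+|Y|\neq\bar 0$; this gives the even parity. Super-symmetry amounts to checking $\omega(X,Y)=-(-1)^{|X||Y|}\omega(Y,X)$, which is trivial except when $X,Y\in\g_\oo$; in that case the required identity reads $\omega(X,Y)=\omega(Y,X)$, which holds by the symmetry of $\beta$.

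For the cocycle identity, I would run a case analysis on the parities of $X,Y,Z$. If at most one of them is odd, then every bracket $[Y,Z],[Z,X],[X,Y]$ is either even or paired with an even argument in $\omega$, so all three terms vanish. If all three are odd, every commutator is even and each term again vanishes. The only substantive case is when exactly two of $X,Y,Z$ are odd and one is even; by cyclically relabelling we may assume $X,Y\in\g_\oo$ and $Z\in\g_\oe$. The third term $\omega(Z,[X,Y])$ then vanishes because $Z$ is even, and the identity collapses to
\[
\beta(X,[Y,Z]) - \beta(Y,[Z,X]) = 0,
\]
after inserting the signs $(-1)^{|X||Z|}=1$ and $(-1)^{|Y||X|}=-1$. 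Using $[Y,Z]=-[Z,Y]$ (both commutators are ordinary because $Z$ is even) together with the $\g_\oe$-invariance hypothesis $\beta([Z,Y],X)+\beta(Y,[Z,X])=0$ and the symmetry of $\beta$, this reduces to an identity. I do not anticipate any real obstacle here; the only thing that needs careful bookkeeping is the sign convention for $\omega$ and for the super-bracket, but no single step is delicate once the cases are separated. The invariance of $\beta$ enters exactly once, and it enters in precisely the form in which the hypothesis is stated.
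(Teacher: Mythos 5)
Your proof is correct and takes essentially the same route as the paper's: both reduce the cocycle identity to the single non-trivial case of one even and two odd arguments, and close it using exactly the stated $\g_\oe$-invariance of $\beta$. The only cosmetic difference is that you use the cyclically symmetric form of the cocycle identity (which cleanly justifies the relabelling), whereas the paper uses the Leibniz form $\omega(X,[Y,Z]) = \omega([X,Y],Z) + (-1)^{|X||Y|}\omega(Y,[X,Z])$; these are equivalent given the super-symmetry of $\omega$.
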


\begin{proof}
 We have to check for homogeneous elements $X,Y,Z \in \g$ the relation
\[ \omega(X,[Y,Z]) = \omega([X,Y], Z) + (-1)^{|X| \cdot |Y|} \omega(Y,[X,Z]). \]
Here the only non-trivial case is the situation where one argument, say
$X$, is even and the others are odd. Then
\[ \omega(X,[Y,Z]) = 0 = \beta([X,Y], Z) + \beta(Y,[X,Z])
= \omega([X,Y],Z) + \omega(Y,[X,Z]) \]
follows from the invariance of $\beta$ under $\g_\oe$.\end{proof}

\begin{prop} \mlabel{prop:2.9} If $\beta$ is a positive definite
$\g_\oe$-invariant form on $\g_\oo$
and $\hat\g$ is the central extension defined by the cocycle
\eqref{eq:cocyc}, then the cone $\cC(\hat\g)$ is pointed.
\end{prop}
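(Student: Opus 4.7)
The plan is to exhibit a linear functional on $\hat\g_\oe$ that is strictly positive on $\cC(\hat\g) \setminus \{0\}$; such a functional immediately implies that $\cC(\hat\g)$ is pointed. The natural candidate is the projection $\pi \: \hat\g_\oe = \R \oplus \g_\oe \to \R$ onto the central summand, because for any odd element $X \in \hat\g_\oo = \g_\oo$ the bracket in $\hat\g$ is
\[ [X,X]_{\hat\g} = \bigl(\omega(X,X),[X,X]_\g\bigr) = \bigl(\beta(X,X),[X,X]_\g\bigr), \]
so that $\pi([X,X]_{\hat\g}) = \beta(X,X) \geq 0$, with equality if and only if $X = 0$.

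First I would observe that the convex cone $C_0$ generated by the elements $[X,X]_{\hat\g}$ lies in the half-space $\{\pi \geq 0\}$. Moreover, since $[\lambda X,\lambda X]_{\hat\g} = \lambda^2 [X,X]_{\hat\g}$, every element of $C_0$ can be written as $\sum_{i} [X_i,X_i]_{\hat\g}$ with $X_i \in \g_\oo$, and $\pi$ of such an element equals $\sum_i \beta(X_i,X_i)$, which vanishes only when each $X_i = 0$. Hence $\pi$ is strictly positive on $C_0 \setminus \{0\}$.

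Next I would promote this strict positivity to the closure $\cC(\hat\g) = \overline{C_0}$. Continuity of $\pi$ gives $\pi \geq 0$ on $\cC(\hat\g)$. For the strict statement, suppose $y = \lim_n y_n \in \cC(\hat\g)$ with $\pi(y) = 0$, and write $y_n = (a_n,b_n) \in \R \oplus \g_\oe$ with $y_n = \sum_i [X_i^{(n)},X_i^{(n)}]_{\hat\g}$, so $a_n = \sum_i \beta(X_i^{(n)},X_i^{(n)}) \to 0$. The crux is a norm estimate
\[ \|[X,X]_\g\|_{\g_\oe} \leq M\,\beta(X,X) \quad (X \in \g_\oo), \]
valid for some constant $M$ since $\g_\oo$ is finite-dimensional and the bracket is bilinear. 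Summing over the representation of $b_n$ gives $\|b_n\| \leq M a_n \to 0$, so $y = 0$. Pointedness then follows at once: if $y \in \cC(\hat\g) \cap (-\cC(\hat\g))$, then $\pi(\pm y) \geq 0$ forces $\pi(y)=0$ and hence $y=0$.

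The main technical point is exactly this closure step: strict positivity of $\pi$ on the raw convex hull $C_0$ is transparent, but in principle the closure could acquire spurious kernel directions; the finite-dimensional norm comparison rules this out and is the one place where the hypothesis that $\beta$ is positive \emph{definite}, not merely semidefinite, gets used beyond the hull stage.
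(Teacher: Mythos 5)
Your proof is correct. You establish pointedness by exhibiting the functional $\pi\colon\hat\g_\oe=\R\oplus\g_\oe\to\R$ (projection to the central line), showing that $\pi\ge 0$ on the cone, and then ruling out nonzero elements of the kernel of $\pi$ in $\cC(\hat\g)$ via the finite-dimensional quadratic norm estimate $\|[X,X]_\g\|\le M\,\beta(X,X)$. The paper instead normalizes the generators to the level set $\{\beta(X,X)=1\}$: since $\beta$ is positive definite this set is compact, so $C=\{(1,[X,X]) : \beta(X,X)=1\}$ is a compact subset of $\R\times\g_\oe$ whose convex hull avoids $0$, and the general fact that a compact convex set in a finite-dimensional space not containing the origin generates a closed pointed cone finishes the argument. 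The two proofs are equivalent at bottom — your estimate $\|[X,X]_\g\|\le M\,\beta(X,X)$ is precisely the quantitative content of the compactness of $C$ — but yours replaces the appeal to the compactness/Carath\'eodory package with an explicit sequential argument. The paper's version is shorter and more geometric; yours is slightly more self-contained and makes visible exactly where positive \emph{definiteness} (not just semidefiniteness) enters. One small point of bookkeeping: you should say a word about Carath\'eodory or at least note that in passing to the closure, each $y_n$ is a finite sum of the generators $[X,X]_{\hat\g}$ (with no restriction on the number of summands needed), which is what makes the estimate $\|b_n\|\le M a_n$ work uniformly in $n$; as written the argument is fine, since the bound depends only on $a_n=\sum_i\beta(X_i^{(n)},X_i^{(n)})$ and not on the number of terms.
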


\begin{proof} The convex cone $\cC(\hat\g)$ is generated by the elements
$(\beta(X,X), [X,X])$, $X \in\g_\oo$, and if $\beta$ is positive definite,
it suffices to consider elements with $\beta(X,X) = 1$. This leads to the compact
subset
\[ C  := \{ (1, [X,X]) \: X \in \g_\oo, \beta(X,X) = 1\} \subeq\R \times \g_\oe\]
whose convex hull does not contain $0$. Therefore it generates a pointed convex cone.
\end{proof}

\begin{cor} Every compact Lie superalgebra $\g$ has a compact central extension
$\hat\g$ for which the cone $\cC(\hat\g)$ is pointed.
\end{cor}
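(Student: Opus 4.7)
The plan is to upgrade Proposition~\ref{prop:2.9} by producing, for an arbitrary compact Lie superalgebra $\g$, a $\g_\oe$-invariant positive definite form on $\g_\oo$, and then verifying that the resulting central extension remains compact. Since $\g$ is compact, the closure $K := \overline{\langle e^{\ad \g_\oe}\rangle} \subeq \Aut(\g)$ is a compact group, and every element of $K$ preserves the $\Z/2$-grading on $\g$ because the generators $e^{\ad X_0}$ with $X_0 \in \g_\oe$ do. Averaging any positive definite inner product on $\g_\oo$ against the normalized Haar measure of $K$ then produces a $K$-invariant, and in particular $\g_\oe$-invariant, positive definite symmetric bilinear form $\beta \colon \g_\oo \times \g_\oo \to \R$.

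Applying Proposition~\ref{prop:2.9} to this $\beta$ yields the central extension $\hat\g = \R \oplus \g$ whose cocycle $\omega$ vanishes unless both arguments are odd, and Proposition~\ref{prop:2.9} already guarantees that $\cC(\hat\g)$ is pointed. It only remains to verify that $\hat\g$ is again compact.

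For this, observe that since $\omega(X,Y) = 0$ whenever one of $X,Y$ lies in $\g_\oe$, the line $\R \cdot (1,0)$ is contained in $\fz(\hat\g)$, and for every $X_0 \in \g_\oe$ the derivation $\ad(0, X_0)$ annihilates this central line and acts on the $\g$-summand precisely as $\ad X_0$. Consequently the subgroup $\langle e^{\ad \hat\g_\oe}\rangle \subeq \Aut(\hat\g)$ consists of automorphisms that fix $\R \cdot (1,0)$ pointwise and restrict to elements of $\langle e^{\ad \g_\oe}\rangle$ on $\g$. The $\g_\oe$-invariance of $\beta$ translates directly into $\g_\oe$-invariance of the cocycle $\omega$; by continuity of the $K$-action on $\g_\oo$ this invariance extends to all of $K$, so every element of $K$ lifts uniquely to an automorphism of $\hat\g$ that is trivial on the central summand. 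This lift defines a continuous embedding $K \into \Aut(\hat\g)$ whose image contains $\langle e^{\ad \hat\g_\oe}\rangle$, so the closure of the latter is compact and $\hat\g$ is compact.

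The only mildly delicate point in this plan is the last step, namely verifying that invariance of $\omega$ under the Lie algebra $\g_\oe$ really does propagate to invariance under the full compact group $K$ so that $K$ lifts to $\Aut(\hat\g)$; this follows from continuity together with the fact that $K$ is generated, as a topological group, by $e^{\ad \g_\oe}$.
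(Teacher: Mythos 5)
Your proof is correct and takes essentially the same approach as the paper: average to get a $\g_\oe$-invariant positive definite form $\beta$ on $\g_\oo$, then invoke Proposition~\ref{prop:2.9}. The one point you handle explicitly that the paper leaves implicit is the verification that the central extension $\hat\g$ is itself compact (that $K$ lifts to $\Aut(\hat\g)$), which is a worthwhile detail to spell out since the corollary asserts it.
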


\proof Since $e^{\ad\g_\oe}$ has compact closure, $\g_\oo$ carries a
positive definite $\g_\oe$-invariant
symmetric bilinear form $\beta$. Then the corresponding
centrally extended Lie superalgebra $\hat\g$ has a pointed cone $\cC(\hat\g)$
by Proposition~\ref{prop:2.9}.
\qed

\section{Decomposition theory}
\mlabel{sec:3}

In this section we turn to the structure of a
reductive Lie superalgebra $\g$ over a field $\K$ of characteristic zero.
{Since the results in this section will be applied to unitary (and so compact) Lie superalgebras, and in this case
$\g$ is always} a semidirect sum of an ideal of $\g_\oe$ and the ideal
of $\g$ generated by  $\g_\oo$ (cf.\ Remark~\ref{new3}(b)),
we assume in the following that
\begin{equation}
  \label{eq:g1gen}
\g_\oe = [\g_\oo, \g_\oo].
\end{equation}
The main goal of this section is to decompose $\g$
into center and ideals which are central extensions of simple Lie superalgebras
or of algebras of the form $\fk \otimes \Lambda_1$, where $\fk$ is a
simple Lie superalgebra.

Let
\[ \fb:= \fz_{\g_\oo}(\g_\oe) = \{Y\in\g_{\bar1}\mid[Y,\g_{\bar0}]=\{0\}\}.\]
We note that $[[\fb,\fb],\g_{\bar 1}]\subseteq [\fb,\g_{\bar 0}]=\{0\}$, and since
$\g$ is generated by $\g_\oo$, this leads to
\begin{equation}\label{miss2}
[\fb,\fb]\subseteq \fz(\g).
\end{equation}
Since $\fb$ is $\g_{\bar0}$-invariant and $\g_\oo$ is a semisimple $\g_\oe$-module,
we have a $\g_\oe$-invariant decomposition
$\g_{\bar1}=\fb\oplus\fa$ with $\fa=\bigoplus_{j\in J}\fa_j$, where
the $\fa_j$ are  simple $\g_{\bar0}$-submodules of $\g_\oo$. We
then have
\begin{equation}\label{m12-4}
[\aij,\fa_k]\subseteq\fa_k\cap(\fa_i+\fa_j)=\{0\}\quad\mbox{ for }\quad
k\not\in\{i,j\},
\end{equation}
and thus in particular
\begin{equation}\label{m12-4b}
[[\fa_i, \fa_i],\fa_k] = \{0\} \quad\mbox{ for }\quad
k\not= i.
\end{equation}
We likewise obtain
\begin{equation}\label{m12-4c}
[[\fb, \fa_j], \fa_k] \subeq \fa_k \cap (\fb + \fa_j) = \{0\} \quad\mbox{ for }\quad
k\not = j.
\end{equation}

Note that
\begin{equation}
  \label{eq:g0sum}
  \g_\oe = [\g_\oo, \g_\oo]
= [\fb,\fb] + [\fb,\fa] + [\fa, \fa]=
\underbrace{[\fb,\fb]}_{\subeq \fz(\g)} + \sum_j [\fb,\fa_j] + \sum_{j,k} [\fa_j, \fa_k].
\end{equation}
Since the  spaces $[[\fb,\fa_j],\fa_i]$ and $[[\fa_j, \fa_k], \fa_i]$ are $\g_\oe$-submodules of
$\fa_i$, they are either zero or equal to $\fa_i$. As $[\fb,\fb]$ is central by \eqref{miss2},
the relations $\fa_i=[\g_{\bar0},\fa_i]$, \eqref{m12-4},
\eqref{m12-4c} and  \eqref{eq:g0sum} imply  that
\begin{equation}
  \label{eq:m12-4d}
[[\fb, \fa_i],\fa_i] = \fa_i \quad \mbox{ or } \quad
[[\fa_j, \fa_i],\fa_i] = \fa_i \quad \mbox{ for some } j \in J.
\end{equation}

We put
\[ J_s:=\{k\in J\: [[\fa,\fa],\fa_k]=\fa_k\}
=\{k\in J\: (\exists k' \in J)\ [[\fa_k,\fa_{k'}],\fa_k]=\fa_k\}.\]
Let $J_a:=J\setminus J_s$ and consider  $k\in J_a$. Then $[[\fa,\fa],\fa_k]=\{0\}$
by definition and so $[[\fb,\fa],\fa_k]=[[\fb,\fa_k],\fa_k]=\fa_k$ follows from
\eqref{m12-4c}. Therefore \eqref{eq:m12-4d} leads to
\begin{equation}\label{teh2}
J_a=\{k\in J\: [[\fb,\fa_k],\fa_k]=\fa_k\hbox{ and }\
(\forall j\in J)\ [[\fa_k,\fa_j],\fa_k]=\{0\}\}.
\end{equation}

\begin{lem}\label{teh5}
For each $k\in J_s$, there exists a unique $k'\in J$ such that
$\fa_k=[[\fa_k,\fa_{k'}],\fa_k]$. Then $k'\in J_s$ and
$\fa_{k'}=[[\fa_{k'},\fa_k],\fa_{k'}]$.
\end{lem}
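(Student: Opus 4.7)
The existence of $k' \in J$ is immediate from the definition of $J_s$ given just before the lemma statement. The substance therefore lies in the symmetry claim $\fa_{k'} = [[\fa_{k'},\fa_k],\fa_{k'}]$ (which forces $k' \in J_s$) and in the uniqueness of $k'$. Throughout I would use super-Jacobi together with the orthogonality relations \eqref{m12-4}, \eqref{m12-4b}, \eqref{m12-4c} and the $\g_\oe$-simplicity of each $\fa_j$. As a preliminary, for $X \in \fa_k$ and $Y \in \fa_{k'}$ with $k \neq k'$, the identity $[X,[X,Y]] = \tfrac12[[X,X],Y]$ (super-Jacobi applied to the three odd elements $X,X,Y$), combined with \eqref{m12-4b} giving $[[\fa_k,\fa_k],\fa_{k'}] = 0$, yields $[X,[X,Y]] = 0$ and hence $[[X,Y],X] = 0$ by super-anticommutativity. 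Polarizing in $X$ gives the antisymmetry $[[X,Y],X'] = -[[X',Y],X]$ for all $X,X' \in \fa_k$.

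For the symmetry claim, observe first that $[[\fa_k,\fa_{k'}],\fa_{k'}]$ is a $\g_\oe$-submodule of the simple module $\fa_{k'}$, hence equals $0$ or $\fa_{k'}$; the case $k = k'$ is immediate. Assume $k' \neq k$ and suppose for contradiction $[[\fa_k,\fa_{k'}],\fa_{k'}] = 0$. Applying the preliminary with the roles of $k$ and $k'$ interchanged (which requires only the hypothesis $[[\fa_k,\fa_{k'}],\fa_{k'}]=0$ in place of \eqref{m12-4b} for the relevant step) gives $[[\fa_{k'},\fa_{k'}],\fa_k] = 0$. From $[\fa_k,[\fa_k,\fa_{k'}]] = -[[\fa_k,\fa_{k'}],\fa_k] = \fa_k$, the ideal $\g(k')$ of $\g$ generated by $\fa_{k'}$ contains $\fa_k$; however, tracing through \eqref{m12-4}, \eqref{m12-4c}, \eqref{m12-4b} together with the two vanishings just derived, every iterated bracket of $\fa_k$ with $\g_\oo$ lands in a subspace which subsequently annihilates $\fa_{k'}$, so the ideal $\g(k)$ does not contain $\fa_{k'}$. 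This yields $\g(k) \subsetneq \g(k')$, contradicting the $\g_\oe$-semisimplicity of $\g_\oo$ and the hypothesis $[[\fa_k,\fa_{k'}],\fa_k] = \fa_k \neq 0$.

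For uniqueness, suppose $k',k'' \in J$ both satisfy $[[\fa_k,\fa_{k'}],\fa_k] = \fa_k = [[\fa_k,\fa_{k''}],\fa_k]$ with $k' \neq k''$. If $k',k'' \neq k$, then \eqref{m12-4} gives $[[\fa_k,\fa_{k'}],\fa_{k''}] = 0$ and $[[\fa_k,\fa_{k''}],\fa_{k'}] = 0$. A super-Jacobi calculation for $[[X,Y],[X',Z]]$ with $X,X' \in \fa_k$, $Y \in \fa_{k'}$, $Z \in \fa_{k''}$, using these vanishings, shows that $[[\fa_k,\fa_{k'}],[\fa_k,\fa_{k''}]]$ lies in both $[\fa_k,\fa_{k'}]$ and $[\fa_k,\fa_{k''}]$; combined with the antisymmetry from the preliminary and the simplicity of $\fa_{k'}$, $\fa_{k''}$ as $\g_\oe$-modules plus the surjectivity of both actions onto $\fa_k$, this forces $\fa_{k'} = \fa_{k''}$ and hence $k' = k''$. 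The edge cases $k' = k$ or $k'' = k$ are handled analogously: if $[[\fa_k,\fa_k],\fa_k] = \fa_k$ then \eqref{m12-4b} shows $[\fa_k,\fa_k]$ acts trivially on every $\fa_l$ with $l \neq k$, and super-Jacobi then rules out any other pairing.

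The main obstacle is the symmetry step: the super-Jacobi identities produce a number of vanishing statements but no contradiction in isolation, and one must carefully bookkeep how the ideal generated by each of $\fa_k$ and $\fa_{k'}$ propagates through the module-theoretic decomposition provided by reductivity in order to produce the strict inclusion $\g(k) \subsetneq \g(k')$ that finally contradicts the hypothesis.
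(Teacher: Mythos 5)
Your preliminary observation is correct and interesting: the graded Jacobi identity for $X,X,Y$ odd does give $[X,[X,Y]] = \tfrac12[[X,X],Y]$, so \eqref{m12-4b} yields $[X,[X,Y]]=0$ for $X\in\fa_k$, $Y\in\fa_{k'}$ with $k\neq k'$, and the antisymmetry $[[X,Y],X']=-[[X',Y],X]$ by polarization. The paper does not isolate this observation; it instead proves $k'\in J_s$ directly by exploiting the alternative \eqref{teh2} characterization of $J_a$ (Step~1 of its proof), then pins down the pairing index $t$ of $k'$ via the definition of $J_s$ (Step~2). So your route diverges genuinely from the paper's.

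However, the symmetry step of your argument has a real gap. You derive (correctly) that $[\fa_k,\g_\oo]$ centralizes $\fa_{k'}$, but the jump from there to the claim that the ideal generated by $\fa_k$ fails to contain $\fa_{k'}$ is not justified: once the ideal absorbs some $\fa_l$ with $l\notin\{k,k'\}$ (which can happen through $[[\fa_k,\fa_l],\fa_l]$), nothing in \eqref{m12-4}--\eqref{m12-4c} or the two vanishing hypotheses controls $[[\fa_l,\fa_{k'}],\fa_{k'}]$, so the inductive ``tracing through'' does not close. Worse, even granting the containment $\g(k)\subsetneq\g(k')$, that statement is \emph{not} a contradiction: nothing prevents one ideal from being strictly contained in another, and reductivity (equivalently, $\g_\oe$-semisimplicity of $\g_\oo$) gives complements for $\g_\oe$-submodules, not for ideals of $\g$. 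The paper avoids this entirely by a concrete bracket computation that lands on $\fa_k\subseteq[[\fa_{k'},\fa_{k'}],\fa_k]=\{0\}$ (or $[[\fa_t,\fa_{k'}],\fa_k]=\{0\}$), giving an actual contradiction with $\fa_k\neq 0$. The uniqueness step has the same flavor of problem: the inference ``lies in both $[\fa_k,\fa_{k'}]$ and $[\fa_k,\fa_{k''}]$, combined with antisymmetry and surjectivity, forces $\fa_{k'}=\fa_{k''}$'' names several ingredients but gives no mechanism; in the paper's Step~3 the decisive point is that after symmetrizing via Step~2 one gets $[\fa_k,\fa_s]\subseteq[\fa_t,\fa_k]+[\fa_k,\fa_k]$, and feeding this back into $\fa_s=[\fa_s,[\fa_s,\fa_k]]$ kills $\fa_s$ via \eqref{m12-4} and \eqref{m12-4b}. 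You would need to produce comparable explicit computations to make either step go through.
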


\begin{proof} Let $k\in J_s$ and $k'\in J$ such that
$\fa_k=[[\fa_k,\fa_{k'}],\fa_k]$.

\nin{\bf Step 1:} First we show that $k' \in J_s$.
If $k' \in J_a$, then $k \not=k'$, so that
\[ [\fa_k,\fa_{k'}] = [\fa_k,[[\fb,\fa_{k'}],\fa_{k'}]]
\subeq [[\fa_k,\fa_{k'}],[\fb,\fa_{k'}]]+
[\underbrace{[\fa_k,[\fb,\fa_{k'}]]}_{=0\ \text{by}\ \eqref{m12-4c}},\fa_{k'}]\subeq [\fb,\fa_{k'}].\]
With \eqref{m12-4c}, this leads to the contradiction
$\fa_k =[[\fa_k,\fa_{k'}],\fa_k]
\subeq [[\fb,\fa_{k'}], \fa_k] = \{0\}.$

\nin {\bf Step 2:} Next we show that
$\fa_{k'}=[\fa_{k'},[\fa_k,\fa_{k'}]]$. We may assume that $k\not=k'$.  As $k'\in J_s$ by
Step $1$, there exists a $t\in J$ such that $\fa_{k'}=[[\fa_{k'},\fa_t],\fa_{k'}]$.
If $t=k'$, then
\[ [\fa_k,\fa_{k'}] = [\fa_k,[[\fa_{k'},\fa_{k'}],\fa_{k'}]]
\subeq [\underbrace{[\fa_k,[\fa_{k'},\fa_{k'}]]}_{=0},\fa_{k'}]+
[[\fa_k,\fa_{k'}],[\fa_{k'},\fa_{k'}]]
\subeq [\fa_{k'},\fa_{k'}] \]
leads with \eqref{m12-4b} to the contradiction
\[ \fa_k=[[\fa_k,\fa_{k'}],\fa_k]
\subeq [[\fa_{k'},\fa_{k'}],\fa_k] = \{0\}.\]
Thus $t\not=k'$.
If $t\not= k$, then $t,k,k'$ are distinct and  so
\[ [\fa_k,\fa_{k'}] = [\fa_k,[[\fa_{k'},\fa_t],\fa_{k'}]]
\subeq [\underbrace{[\fa_k,[\fa_{k'},\fa_{t}]]}_{=0},\fa_{k'}]
+ [[\fa_k,\fa_{k'}],[\fa_{k'},\fa_{t}]]
\subeq [\fa_{k'},\fa_{t}] \]
leads to the contradiction
$\fa_k = [[\fa_k,\fa_{k'}],\fa_k] \subeq
[[\fa_t,\fa_{k'}],\fa_k] = \{0\}$.
We conclude that $t = k$.

\nin{\bf Step 3:} Now  we show the uniqueness of $k'$. Suppose that $s\not=t\in J$ satisfy
\[ \fa_k=[\fa_k,[\fa_k,\fa_{s}]]=[\fa_k,[\fa_k,\fa_t]].\]
By the symmetry proved in the previous step,
\[ \fa_s=[\fa_s,[\fa_s,\fa_k]] \quad \mbox{ and } \quad \fa_t=[\fa_t,[\fa_t,\fa_k]].\]
This leads to
\begin{align*}
[\fa_k,\fa_s]&=[[\fa_k,[\fa_t,\fa_k]],\fa_s]
\subseteq [[\fa_k,\fa_s],[\fa_t,\fa_k]]+[[[\fa_t,\fa_k],\fa_s],\fa_k]\\
&\subseteq [\fa_t,\fa_k]+[[[\fa_s,\fa_t],\fa_k],\fa_k]+[[[\fa_s,\fa_k],\fa_t],\fa_k]\\&
\subseteq [\fa_t,\fa_k]+[\fa_k,\fa_k].
\end{align*}
Similarly,
\begin{equation}
  \label{eq:10}
[\fa_k,\fa_t]\subseteq [\fa_s,\fa_k]+[\fa_k,\fa_k].
\end{equation}
If $k\not\in\{s,t\}$, then (\ref{m12-4}) and \eqref{m12-4b}
lead to the contradiction
\[ \fa_s=[\fa_s,[\fa_s,\fa_k]]\subseteq [\fa_s,[\fa_t,\fa_k]] + [\fa_s,[\fa_k,\fa_k]]=\{0\}.\]
If $k=s$, then $s\not=t$ and \eqref{eq:10} yield
\[ \fa_t=[\fa_t,[\fa_t,\fa_k]]\subeq  [\fa_t,[\fa_s,\fa_k]] + [\fa_t,[\fa_k,\fa_k]]=\{0\},\]
which is absurd.
By symmetry, a similar argument works for the case $k=t$. This shows that
$t = s$ and the proof is complete.
\end{proof}

\begin{lem}\label{teh4}
$[[\fa_k,\fa_k],\fa_k]=\{0\}$ if and only if $[\fa_k,\fa_k]\subseteq\fz(\g)$.
This holds in particular for $k\in J_a$ and  $k \in J_s$ with $k \not=k'$.
\end{lem}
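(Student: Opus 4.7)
The forward implication is immediate from the definition of the center. For the reverse implication, set $I := [\fa_k,\fa_k] \subseteq \g_\oe$ and assume $[I,\fa_k] = \{0\}$; the plan is to show $[I,\g_\oo] = \{0\}$ first and then bootstrap to $[I,\g] = \{0\}$ using the generation hypothesis \eqref{eq:g1gen}.

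Using the decomposition $\g_\oo = \fb \oplus \bigoplus_{j \in J} \fa_j$, I would check each summand separately. The bracket $[I,\fb]$ vanishes because $I \subseteq \g_\oe$ and, by definition, $\fb = \fz_{\g_\oo}(\g_\oe)$ centralises all of $\g_\oe$. For $j \neq k$, $[I,\fa_j] = \{0\}$ is exactly \eqref{m12-4b}, and for $j = k$ this is the standing hypothesis. Hence $[I,\g_\oo] = \{0\}$. Now, for arbitrary $Y,Z \in \g_\oo$ and $X \in I$, the super Jacobi identity yields $[X,[Y,Z]] = [[X,Y],Z] + [Y,[X,Z]] = 0$ (no signs occur because $X$ is even). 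Since $\g_\oe = [\g_\oo,\g_\oo]$, this gives $[I,\g_\oe] = \{0\}$, and combined with $[I,\g_\oo] = \{0\}$ we conclude $I \subseteq \fz(\g)$.

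For the \emph{``in particular''} claim, the case $k \in J_a$ is immediate from \eqref{teh2}, which asserts $[[\fa_k,\fa_j],\fa_k] = \{0\}$ for every $j \in J$; specialising to $j = k$ yields the desired vanishing. For $k \in J_s$ with $k' \neq k$ as given by Lemma~\ref{teh5}, suppose for contradiction that $[[\fa_k,\fa_k],\fa_k] \neq \{0\}$. Since $\fa_k$ is a simple $\g_\oe$-module and $[[\fa_k,\fa_k],\fa_k]$ is a $\g_\oe$-submodule of $\fa_k$, one must have $[[\fa_k,\fa_k],\fa_k] = \fa_k$. But then $k$ itself witnesses the defining relation $\fa_k = [[\fa_k,\fa_k],\fa_k]$, contradicting the uniqueness of $k'$ in Lemma~\ref{teh5}. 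The only real care needed anywhere is in lifting $[I,\g_\oo]=\{0\}$ to $[I,\g_\oe]=\{0\}$ via Jacobi, but since $I$ is purely even, no sign conventions intrude, so this step is routine.
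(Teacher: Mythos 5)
Your proof is correct and follows essentially the same route as the paper: for the reverse direction you show $[[\fa_k,\fa_k],\g_\oo]=\{0\}$ using \eqref{m12-4b} and the fact that $\fb$ centralizes $\g_\oe$, then invoke $\g_\oe=[\g_\oo,\g_\oo]$ to conclude centrality, and for the ``in particular'' part you use the description of $J_a$ together with the simplicity of $\fa_k$ and the uniqueness of $k'$ from Lemma~\ref{teh5}. Your write-up is only slightly more explicit than the paper's (e.g.\ spelling out the Jacobi identity step and the simple-module argument) but is otherwise the same proof.
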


\begin{proof} Clearly $[\fa_k,\fa_k]\subseteq\fz(\g)$ implies
$[[\fa_k,\fa_k],\fa_k]=\{0\}$. If, conversely,
$[[\fa_k,\fa_k],\fa_k]=\{0\}$, then \eqref{m12-4b} leads to
\[ [[\fa_k,\fa_k],\g_{\bar1}]=[[\fa_k,\fa_k],\fa]=[[\fa_k,\fa_k],\fa_k]=\{0\},\]
so that  $[\fa_k,\fa_k]\subseteq\fz(\g)$ follows from
$\g_{\bar0}=[\g_{\bar1},\g_{\bar1}]$.

If $k \in J_a$, then $[[\fa_k,\fa_k],\fa_k]=\{0\}$ by definition.
If $k \in J_s$ and $k\not= k'$, then the uniqueness of $k'$
implies that $[[\fa_k, \fa_k], \fa_k] = \{0\}$.
\end{proof}

\begin{lem}\label{teh6}
\begin{itemize}
  \item[\rm(i)]  If $k\in J_s$, then $[\fb,\fa_k]\subseteq [\fa_k,\fa_{k'}] \cap [\g_\oe,\g_\oe]$.
  \item[\rm(ii)] $[\fa_k,\fa_s]=\{0\}$ if $k\in J_a$ and $s\not=k$,
or $k\in J_s$ and $s\not\in\{k,k'\}$.
  \item[\rm(iii)] If $j,k \in J_s$ and $k \not\in \{j,j'\}$, then
$[\fa_k,\fa_{k'}] \cap [\fa_j, \fa_{j'}] \subeq \fz(\g)$.
\end{itemize}
\end{lem}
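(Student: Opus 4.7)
For part (i), fix $k \in J_s$ and use the defining identity $\fa_k = [[\fa_k, \fa_{k'}], \fa_k]$ to write a generic $a \in \fa_k$ as a sum of nested brackets $[[\alpha, \beta], \gamma]$ with $\alpha, \gamma \in \fa_k$, $\beta \in \fa_{k'}$. For $b \in \fb$, super-Jacobi combined with $[\fb, \g_\oe] = 0$ collapses $[b, [[\alpha, \beta], \gamma]]$ to the single term $[[\alpha, \beta], [b, \gamma]]$, which visibly sits in $[\g_\oe, \g_\oe]$. A second Jacobi expansion---now treating the even element $[b, \gamma] \in \g_\oe$ as the outer operator---rewrites this bracket as $[[[b, \gamma], \alpha], \beta] + [\alpha, [[b, \gamma], \beta]]$, and the $\g_\oe$-stability of $\fa_k$ and $\fa_{k'}$ places it in $[\fa_k, \fa_{k'}]$.

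For part (ii) I would argue in three steps. \emph{Step 1:} the subspace $[\fa_k, \fa_s]$ is $\g_\oe$-invariant (by Leibniz), so each $[[\fa_k, \fa_s], \fa_t]$ is a $\g_\oe$-submodule of the simple module $\fa_t$ and hence equals $0$ or $\fa_t$. For $t \notin \{k, s\}$ equation \eqref{m12-4} gives $0$; for $t \in \{k, s\}$ the alternative $\fa_t$ would combine with Lemma~\ref{teh5} to force $k \in J_s$ with partner $s$, contradicting either $k \in J_a$ or $s \notin \{k, k'\}$. Together with $[[\fa_k, \fa_s], \fb] = 0$, this yields $[\fa_k, \fa_s] \subseteq \fz(\g)$. \emph{Step 2:} for $b \in \fb$, $\alpha \in \fa_k$, $\beta \in \fa_s$, super-Jacobi plus $[\fb, \g_\oe] = 0$ gives $0 = [b, [\alpha, \beta]] = [[b, \alpha], \beta] - [\alpha, [b, \beta]]$; the two summands sit in the disjoint direct summands $\fa_s$ and $\fa_k$, so each is zero, whence $[[\fb, \fa_k], \fa_s] = 0$. \emph{Step 3:} rewrite a generic $\alpha' \in \fa_k$ using the appropriate generating identity---$\fa_k = [[\fb, \fa_k], \fa_k]$ when $k \in J_a$ (from \eqref{teh2}) and $\fa_k = [[\fa_k, \fa_{k'}], \fa_k]$ when $k \in J_s$---and distribute $[\alpha', \beta]$ via super-Jacobi. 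Both resulting summands vanish: one contains an inner factor $[\fa_k, \fa_s] \subseteq \fz(\g)$ by Step~1, and the other is killed either by Step~2 (in the $J_a$ case) or by \eqref{m12-4} applied to $[[\fa_k, \fa_{k'}], \fa_s]$ (in the $J_s$ case, using $s \notin \{k, k'\}$).

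For part (iii), take $z \in [\fa_k, \fa_{k'}] \cap [\fa_j, \fa_{j'}]$; it suffices to show $[z, \fa_t] = 0$ for all $t$, since $[z, \fb] = 0$ and $\g_\oe = [\g_\oo, \g_\oo]$. For $t \notin \{k, k'\}$, equation \eqref{m12-4} applied to $z \in [\fa_k, \fa_{k'}]$ yields $0$; for $t \in \{k, k'\}$ the same applied to $z \in [\fa_j, \fa_{j'}]$ works provided $\{k, k'\} \cap \{j, j'\} = \emptyset$. The hypothesis directly gives $k \notin \{j, j'\}$, and Lemma~\ref{teh5} supplies $k' \notin \{j, j'\}$: were $k' = j$, uniqueness of the partner would force $(k')' = j' = k$, contradicting the hypothesis, and similarly for $k' = j'$. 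The hardest point is Step~3 of (ii), which upgrades mere centrality to genuine vanishing; this is exactly where one needs the generating identity for $\fa_k$ together with the auxiliary relation $[[\fb, \fa_k], \fa_s] = 0$ extracted from the $\fb$-Jacobi identity in Step~2.
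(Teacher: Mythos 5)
Your proofs of parts (i) and (iii) follow the same route as the paper, except that in (iii) you make explicit the verification that $k' \notin \{j, j'\}$ (via Lemma~\ref{teh5}), which the paper leaves to the reader. Part (ii) is where you genuinely diverge. The paper argues by a case split on whether $k$ and $s$ lie in $J_s$ or $J_a$: in the $(J_s,J_s)$ case it shows $[\fa_k,\fa_s]$ lands in $[\fa_k,\fa_{k'}]\cap[\fa_s,\fa_{s'}]\cap[\g_\oe,\g_\oe]$, deduces $[[\fa_k,\fa_s],\g_\oo]=\{0\}$, and then invokes reductivity of $\g_\oe$ to get $[\g_\oe,\g_\oe]\cap\fz(\g_\oe)=\{0\}$; in the cases involving $J_a$ it expands via the generating identity $\fa_k=[[\fb,\fa_k],\fa_k]$ and computes the bracket to zero outright. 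You instead run a uniform two-stage argument: Step~1 uses \eqref{m12-4}, the simplicity of each $\fa_t$ as a $\g_\oe$-module, and the uniqueness in Lemma~\ref{teh5} to conclude $[\fa_k,\fa_s]\subeq\fz(\g)$ in all cases at once; Step~3 then expands $\fa_k$ via the relevant generating identity ($[[\fb,\fa_k],\fa_k]$ or $[[\fa_k,\fa_{k'}],\fa_k]$) and observes that in the super-Jacobi expansion one summand vanishes because $[\fa_k,\fa_s]$ is central, and the other vanishes by \eqref{m12-4} or \eqref{m12-4c}. This is correct and in fact a bit cleaner, since centrality of the inner bracket kills the offending term directly, without needing the reductivity fact $[\g_\oe,\g_\oe]\cap\fz(\g_\oe)=\{0\}$. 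Two small remarks: your Step~2 simply re-derives \eqref{m12-4c}, which is already in hand and could be cited; and in Step~1, for $t\in\{k,s\}$ the possibility $[[\fa_k,\fa_s],\fa_t]=\fa_t$ is what triggers the appeal to Lemma~\ref{teh5} --- it is worth stating that $[[\fa_k,\fa_s],\fa_k]=\fa_k$ would place $k\in J_s$ with partner $s$ by the definition of $J_s$ plus uniqueness, and similarly for $t=s$, which you do indeed gesture at.
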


\begin{proof} (i) follows from
\[ [\fb,\fa_k]=[\fb,[\fa_k,[\fa_k,\fa_{k'}]]]\subseteq
[[\fb,\fa_k],[\fa_k,\fa_{k'}]]+
[\fa_k,\underbrace{[\fb,[\fa_k,\fa_{k'}]]}_{=0}]\subseteq [\fa_k,\fa_{k'}] \cap [\g_\oe,\g_\oe].\]

(ii) Assume first that $k\in J_s$ and $s\not\in\{k,k'\}$.
Then
\begin{align*}
[\fa_s,\fa_k]&=[\fa_s,[\fa_k,[\fa_k,\fa_{k'}]]]
\subseteq [[\fa_s,\fa_k],[\fa_k,\fa_{k'}]]+[\underbrace{[\fa_s,[\fa_k,\fa_{k'}]]}_{=0},\fa_k]\\
&\subseteq [\fa_k,\fa_{k'}]\cap [\fg_{\bar0},\g_{\bar0}].
\end{align*}
If $s\in J_s$, then $s\not\in\{k,k'\}$ implies $k\not\in \{s,s'\}$ and
we get similarly
\[ [\fa_s,\fa_k]\subseteq [\fa_s,\fa_{s'}] \cap [\g_\oe,\g_\oe].\]
Thus $[\fa_s,\fa_k]\subseteq [\fa_s,\fa_{s'}]\cap [\fa_k,\fa_{k'}]\cap[\g_{\bar0},\g_{\bar0}]$, and
(\ref{m12-4b}) thus leads to
$[[\fa_s,\fa_k],\g_\oo] = \{0\}$, which in turn shows that
\[ [\fa_s,\fa_k] \subeq [\g_\oe, \g_\oe] \cap \fz(\g)
\subseteq [\g_{\bar0},\g_{\bar0}]\cap\fz(\g_{\bar0})=\{0\}.\]
Here we use that $\g_{\bar0}$ is reductive.

If $s\in J_a$, then
\begin{align*}
[\fa_k,\fa_s]&=[\fa_k,[\fa_s,[\fa_s,\fb]]]
\subseteq[[\fa_k,\fa_s],[\fa_s,\fb]]+\underbrace{[[\fa_k,[\fa_s,\fb]]}_{=0\ \text{by} \eqref{m12-4c}},\fa_s]\\
&= [[\fa_k,\fa_s],[\fa_s,\fb]] \subseteq
[\underbrace{[[\fa_k,\fa_s],\fa_s]}_{=0\ \text{by } \eqref{teh2}},\fb]
+[\underbrace{[[\fa_k,\fa_s],\fb]}_{=0},\fa_s] = \{0\}.
\end{align*}
The case $k \in J_a$ and $s \in J_s$ follows from the first part.

Finally assume that $k,s\in J_a$, $k\not=s$. Then
\[ [\fa_k,\fa_s]=[\fa_k,[\fa_s,[\fa_s,\fb]]]\subseteq
[[\fa_k,\fa_s],[\fa_s,\fb]] + [\fa_s, \underbrace{[\fa_k,[\fa_s, \fb]]}_{=0\ \text{by } \eqref{m12-4c}}]
\subseteq [\fa_s,\fb]\cap [\g_{\bar0},\g_{\bar0}].\]
We likewise get
$[\fa_k,\fa_s]\subseteq [\fa_k,\fb]$, so that \eqref{m12-4c} leads to
$[\fa_k,\fa_s] \subeq \fz(\g)$, and thus to
$[\fa_k,\fa_s]\subseteq  \fz(\g_{\bar0})\cap [\g_{\bar0},\g_{\bar0}]=\{0\}$.

(iii) Let $\fz := [\fa_k,\fa_{k'}] \cap [\fa_j, \fa_{j'}]$.
Then \eqref{m12-4b} shows that
\[ [\fz,\g_\oo] = [\fz, \fa_k +\fa_{k'}] \subeq [[\fa_i, \fa_{j'}], \fa_k+\fa_{k'}] = \{0\},\]
so that the assertion follows from $\g_\oe = [\g_\oo, \g_\oo]$.
\end{proof}

\begin{rem} \mlabel{rem:subalg} If $\fm \subeq \g_\oo$ is a $\g_\oe$-invariant subspace, then
$\fm + [\fm,\fm] \subeq \g$ is a $\g_\oe$-invariant subalgebra of $\g$.
In particular, $[\fm,\fm] \trile\g_\oe$ is an ideal.
\end{rem}

We now consider the following subalgebras of $\g$ (cf.\ Remark~\ref{rem:subalg}):
\[ \begin{array}{ll}
\g(k):=\fa_{k}+\fa_{k'}+[\fa_k, \fa_k] +[\fa_k, \fa_{k'}] + [\fa_{k'}, \fa_{k'}]\quad \mbox{ for }\quad  k\in J_s,\\
\fc(k):=\fa_k+[\fa_k,\fa_k] + [\fb,\fa_k] \qquad \mbox{ for } \quad k\in J_a.
\end{array}\]

For $k\not= k'$ in $J_s$, the relation $[\fa_k, \fa_k] + [\fa_{k'},\fa_{k'}] \subeq \fz(\g)$
(Lemma~\ref{teh6}) implies that
\[ \g(k) \cap [\g_\oe, \g_\oe] \subeq [\fa_k, \fa_{k'}].\]
From Lemma \ref{teh6}(i), we obtain
\[ [\fb,\fa]=\sum_{k\in J_a}[\fb,\fa_k]+\sum_{k\in J_s}[\fb,\fa_k]\subseteq \sum_{k\in J_a}[\fb,\fa_k]
+\sum_{k\in J_s}[\fa_k,\fa_{k'}]. \]
By Lemmas~\ref{teh4} and \ref{teh6}(ii) we have
\[ [\fa,\fa]
=\sum_{k\in J_a}[\fa_k,\fa_k]
+\sum_{k'\not=k\in J_s}[\fa_k,\fa_k]
+ \sum_{k\in J_s}[\fa_k,\fa_{k'}]
\subeq \fz(\g) + \sum_{k\in J_s}[\fa_k,\fa_{k'}].\]
Thus
\begin{align*}
\g&=\underbrace{[\fb,\fb]+[\fb,\fa]+[\fa,\fa]}_{\g_\oe}+\underbrace{\fb+\fa}_{\g_\oo}
=\fb + [\fb,\fb] +\sum_{k\in J_a}\fc(k)+\sum_{k\in J_s}\g(k).
\end{align*}

{The structure of subalgebras $\g(k)$ and $\fc(j)$, $k\in J_s, j\in J_a$ and possibilities for
the index sets $J_s$ and $J_a$ will be discussed in what follows, see for example Proposition \ref{prop:3.5}(iv), Example \ref{ex:tk}, Remark \ref{new-late} and Example \ref{ex:tildetk}.}

\begin{prop} \mlabel{prop:3.5}
For every $j \in J_s$, the Lie superalgebra $\g(j)$ is an ideal of $\g$
with the following properties:
\begin{itemize}
\item[\rm(i)] $\fa_j$ and $\fa_{j'}$ are simple $\g(j)_\oe$-modules.
In particular, $\g(j)_\oo$ is a semisimple $\g(j)_\oe$-module.
\item[\rm(ii)] $\z_{\g(j)_\oo}(\g(j)_\oe) = \{0\}$,
$\fz(\g(j)) \subeq \fz(\g)_\oe$ and $\fz(\g(j)_\oe) \subeq \fz(\g_\oe)$.
\item[\rm(iii)] $\pg(j) := \g(j)/\fz(\g(j))$ is a classical Lie superalgebra.
\item[\rm(iv)] If $\K = \R$ and $\g(j)$ is compact,
then $j = j'$. This holds in particular for every $j\in J_s$
if $\g$ is compact.
\end{itemize}
\end{prop}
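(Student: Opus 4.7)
The plan is to prove the four assertions in succession, using the identities \eqref{miss2}, \eqref{m12-4}, \eqref{m12-4b}, \eqref{m12-4c} and the structural Lemmas~\ref{teh4}, \ref{teh5} and~\ref{teh6}. To see that $\g(j)$ is a $\g$-ideal it suffices to verify $[\g_\oo,\g(j)_\oo]\subeq\g(j)_\oe$, since then $[\g_\oe,\g(j)]\subeq\g(j)$ follows via Jacobi from $\g_\oe=[\g_\oo,\g_\oo]$, while $[\g(j)_\oo,\g(j)_\oo]\subeq\g(j)_\oe$ holds by construction. By Lemma~\ref{teh6}(ii) (together with the symmetry $(j')'=j$ from Lemma~\ref{teh5}), $[\fa_k,\fa_j+\fa_{j'}]=\{0\}$ for $k\notin\{j,j'\}$, and both $[\fb,\fa_j]$ and $[\fb,\fa_{j'}]$ land in $[\fa_j,\fa_{j'}]\subeq\g(j)_\oe$ by Lemma~\ref{teh6}(i). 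For (i), the decomposition \eqref{eq:g0sum} of $\g_\oe$, together with \eqref{miss2}, \eqref{m12-4}, \eqref{m12-4c} and Lemma~\ref{teh6}, shows that every summand of $\g_\oe$ not already lying in $\g(j)_\oe$ acts trivially on $\fa_j$ and on $\fa_{j'}$. Hence the $\g_\oe$-action on these modules factors through $\g(j)_\oe$, so their $\g_\oe$-simplicity passes to $\g(j)_\oe$-simplicity, and $\g(j)_\oo$ is semisimple.

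For (ii), any nonzero $\g(j)_\oe$-fixed vector in $\g(j)_\oo$ would span a proper submodule on which $\g(j)_\oe$ acts trivially, contradicting (i) together with $[[\fa_j,\fa_{j'}],\fa_j]=\fa_j\neq\{0\}$ (the defining property of $J_s$, with the symmetric identity for $j'$ from Lemma~\ref{teh5}); therefore $\fz_{\g(j)_\oo}(\g(j)_\oe)=\fz(\g(j))_\oo=\{0\}$. For $z\in\fz(\g(j))_\oe$, the relations $[z,\fb]=\{0\}$ (from $\fb=\fz_{\g_\oo}(\g_\oe)$), $[z,\fa_k]=\{0\}$ for $k\notin\{j,j'\}$ (from \eqref{m12-4}) and $[z,\g(j)_\oo]=\{0\}$ combine to give $[z,\g_\oo]=\{0\}$, and Jacobi with $\g_\oe=[\g_\oo,\g_\oo]$ then yields $z\in\fz(\g)$. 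Finally, $\g(j)_\oe$ is an ideal of the reductive Lie algebra $\g_\oe$ (since $\g(j)\trile\g$), so $\fz(\g(j)_\oe)\subeq\fz(\g_\oe)$, as holds for any ideal of a reductive Lie algebra.

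For (iii), reductivity of $\pg(j)$ is immediate from (i) and the inclusion $\fz(\g(j))\subeq\g(j)_\oe$ from (ii). For simplicity, let $\cI\trile\g(j)$ with $\cI\not\subeq\fz(\g(j))$; by (i), $\cI_\oo\in\{\{0\},\fa_j,\fa_{j'},\fa_j+\fa_{j'}\}$. If $\cI_\oo=\{0\}$ then $[\cI,\g(j)_\oe]=[\cI,[\g(j)_\oo,\g(j)_\oo]]=\{0\}$, so $\cI\subeq\fz(\g(j))$, contradicting our assumption. If $\cI_\oo=\fa_j$ alone (so $j\neq j'$), then $[\fa_{j'},\fa_j]\subeq\cI_\oe$ gives $[[\fa_{j'},\fa_j],\fa_{j'}]\subeq\cI\cap\fa_{j'}=\{0\}$, contradicting $[[\fa_{j'},\fa_j],\fa_{j'}]=\fa_{j'}\neq\{0\}$ from Lemma~\ref{teh5}; the case $\cI_\oo=\fa_{j'}$ is symmetric. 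Hence $\cI_\oo=\g(j)_\oo$, so $\cI\supeq[\cI_\oo,\cI_\oo]=\g(j)_\oe$ and $\cI=\g(j)$.

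For (iv), compactness passes to the quotient $\pg(j)=\g(j)/\fz(\g(j))$, because the image in $\GL(\pg(j))$ of the compact closure of $e^{\ad\g(j)_\oe}\subeq\GL(\g(j))$ is compact and contains the subgroup generated by $e^{\ad\pg(j)_\oe}$. Applying Theorem~\ref{thm:1.4} to the simple compact Lie superalgebra $\pg(j)$ then gives that $\pg(j)_\oo$ is a simple $\pg(j)_\oe$-module; but by (i), $\pg(j)_\oo\cong\fa_j+\fa_{j'}$ is a sum of simple submodules, which forces $j=j'$. The principal obstacle is the case analysis in (iii), where the uniqueness of $k'$ and the induced symmetry $(j')'=j$ from Lemma~\ref{teh5} are indispensable for excluding proper nonzero odd parts of ideals.
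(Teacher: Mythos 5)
Your overall strategy mirrors the paper's for the ideal claim, for (i), (ii) and (iv), and those parts are sound. However, there is a genuine gap in your proof of (iii).

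You assert that, by (i), the odd part $\cI_\oo$ of an ideal $\cI\trile\g(j)$ must be one of $\{0\}$, $\fa_j$, $\fa_{j'}$, $\fa_j+\fa_{j'}$. This would be true if $\fa_j$ and $\fa_{j'}$ were known to be \emph{non-isomorphic} as $\g(j)_\oe$-modules, but (i) only gives simplicity, not non-isomorphism. If $\fa_j\cong\fa_{j'}$, then $\fa_j\oplus\fa_{j'}$ has a full family of ``diagonal'' simple submodules $\{(v,c\phi(v)):v\in\fa_j\}$ (for a $\g(j)_\oe$-isomorphism $\phi$ and scalars $c\neq 0$), and nothing you have established rules out one of these being $\cI_\oo$. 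Your subsequent contradiction (``$[\fa_{j'},\fa_j]\subeq\cI_\oe$'') also does not transfer to the diagonal case: from $[\fa_{j'},\cI_\oo]\subeq\cI_\oe$ and $\cI_\oo$ diagonal you only obtain $[\fa_{j'},\fa_j]+\fz\subeq\cI_\oe+\fz$ for some central $\fz$, not $[\fa_{j'},\fa_j]\subeq\cI_\oe$ itself. One cannot appeal to the classification of classical superalgebras to see that $\fa_j\not\cong\fa_{j'}$, since (iii) is precisely what is needed to place $\pg(j)$ in that classification.

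The paper's proof of (iii) bypasses this entirely by analysing $\cI_\oe$ rather than $\cI_\oo$. It first shows that $\cI_\oe\not\subeq\fz(\g(j))$: if $\cI_\oe\subeq\fz(\g(j))$, then $[\cI_\oo,\g(j)_\oo]\subeq\fz(\g(j))$, hence $[\cI_\oo,\g(j)_\oe]=[\cI_\oo,[\g(j)_\oo,\g(j)_\oo]]=\{0\}$, so $\cI_\oo\subeq\fz_{\g(j)_\oo}(\g(j)_\oe)=\{0\}$ by (ii), contradicting $\cI\not\subeq\fz(\g(j))$ (and also ruling out the $\cI_\oo=\{0\}$ case). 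Having $\cI_\oe\not\subeq\fz(\g(j))$, one of $[\cI_\oe,\fa_j]$, $[\cI_\oe,\fa_{j'}]$ is nonzero, say the first; since $[\cI_\oe,\fa_j]\subeq\cI_\oo\cap\fa_j$ is a nonzero submodule of the simple module $\fa_j$, one gets $\fa_j\subeq\cI_\oo$, and then $\fa_{j'}=[\fa_{j'},[\fa_{j'},\fa_j]]\subeq\cI_\oo$ by Lemma~\ref{teh5}, so $\cI=\g(j)$. This avoids any classification of $\cI_\oo$ as a submodule of $\fa_j\oplus\fa_{j'}$ and is the step your proof is missing.

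One minor remark on the ideal claim: you invoke Jacobi to deduce $[\g_\oe,\g(j)]\subeq\g(j)$ from $[\g_\oo,\g(j)_\oo]\subeq\g(j)_\oe$ and $\g_\oe=[\g_\oo,\g_\oo]$, but this also uses $[\g_\oo,\g(j)_\oe]\subeq\g(j)_\oo$ (which follows from $\g(j)_\oe=[\g(j)_\oo,\g(j)_\oo]$ and the previous step). It is cleaner to observe directly, as the paper does, that $\g(j)$ is $\g_\oe$-invariant by construction, since $\fa_j$, $\fa_{j'}$ are $\g_\oe$-submodules.
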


\proof First we observe that
$[\fb, \fa_j + \fa_{j'}] \subeq [\fa_j, \fa_{j'}]$
by Lemma~\ref{teh6}(i). For $k \not\in \{j,j'\}$ we get
$[\fa_k, \fa_j + \fa_{j'}] =\{0\}$
from Lemma~\ref{teh6}(ii). This leads to
\[ [\g_\oo, \fa_j + \fa_{j'}] = [\fb + \fa_j + \fa_{j'}, \fa_j + \fa_{j'}] \subeq \g(j).\]
As $\g_\oe = [\g_\oo, \g_\oo]$ and $\g(j)$ is $\g_\oe$-invariant, this implies that
$\g(j) \trile \g$.

(i) Since all subspaces $\fa_k$, $k \not\in \{j,j'\}$, commute with
$\g(j)$ by Lemma~\ref{teh6}(ii) and all subspaces
$[\fb, \fa_k]$, $k \not\in \{j,j'\}$, commute with $\g(j)$ by \eqref{m12-4c},
Lemma~\ref{teh6}(i) leads to
\[ \g_\oe
\subeq \fz_{\g_\oe}(\g(j)) + \g(j)_\oe + [\fb, \fa_j + \fa_{j'}]
\subeq \fz_{\g_\oe}(\g(j)) + \g(j)_\oe. \]
Therefore $\fa_j$ and $\fa_{j'}$ are simple $\g(j)_\oe$-modules.

(ii) From the non-triviality of the simple $\g(j)_\oe$-modules
$\fa_j$ and $\fa_{j'}$, it follows that
the centralizer of $\g(j)_\oe$ in $\g(j)_\oo$ vanishes.
In particular, $\fz := \fz(\g(j)) \subeq \g(j)_\oe$.

From Lemma~\ref{teh6} it follows that
$[\fz,\g_\oo] = [\fz,\fa] = [\fz,\fa_j + \fa_{j'}]= \{0\}$,
so that $\fz \subeq \fz(\g)$ because $\g_\oo$ generates $\g$.

As $\fz(\g(j)_\oe) \trile \g(j)_\oe$ is $\g_\oe$-invariant, it is an abelian
ideal of the reductive Lie algebra $\g_\oe$, hence central in $\g_\oe$.

(iii)  Let $\fh := \g(j)$ and $\{0\}\not= I \trile \fh$ be an ideal.
If $I_{\bar1}=\{0\}$, then $I=I_{\bar0}\subseteq\fz(\h)$.

Suppose that $I_\oo \not=\{0\}$.
If $I_{\bar0}\subseteq\fz(\h)$, then $[I_{\bar1},\h_{\bar1}]\subseteq I_{\bar 0}
\subseteq\fz(\h)$, which leads to
$[I_{\bar1},\h_{\bar0}]=[I_{\bar1},[\h_{\bar1},\h_{\bar1}]]=\{0\}$.
In view of (ii), this contradicts $I_\oo \not=\{0\}$.
We conclude that $I_{\bar 0}\not\subseteq\fz(\h)$.
This implies that either $[I_{\bar0},\fa_j]\not=\{0\}$ or $[I_{\bar0},\fa_{j'}]\not=\{0\}$.
By symmetry, we may assume $[I_{\bar0},\fa_j]\not=\{0\}$.
Then $\{0\}\not=[I_{\bar0},\fa_{j}]\subseteq I_{\bar1}\cap\fa_{j}$.
Since the latter is a nonzero $\g(j)_\oe$-submodule of
the simple $\g(j)_\oe$-module $\fa_j$, we obtain $\fa_j\subseteq I_{\bar1}$.
We now get
\[ \fa_{j'}=[\fa_{j'},[\fa_{j'},\fa_j]]\subseteq [\fa_{j'},[\fa_{j'},I_{\bar 1}]]
\subseteq I_{\bar1},\]
and thus $I=\fh$. This proves (iii).

(iv) Since $\g(j)$ is compact, the same holds for the quotient
Lie superalgebra $\pg(j)$ which is classical by (iii). Hence the assertion
follows from Theorem~\ref{thm:1.4}.
\qed

\begin{ex} \mlabel{ex:tk} (Tangent superalgebras)
Let $\fk$ be a simple Lie algebra. Then the tangent algebra
$T\fk \cong  \fk \otimes \Lambda_1$, $\Lambda_1 = \K[\xi]$, $\xi^2 = 0$,
carries the structure of a
Lie superalgebra, where $(T\fk)_\oe = \fk \otimes 1$ and $(T\fk)_\oo = \fk \otimes \xi$.
Note that $\fk \otimes \xi$ is an abelian ideal.
Here the brackets of odd elements vanish.
Since $T\fk \cong \fk^2$ as a module of $\fk \cong (T\fk)_\oe$, this Lie algebra
is reductive. It has an odd derivation
\[ D = \frac{\partial}{\partial \xi} \: T\fk \to T\fk, \quad D(x \otimes 1 + y \otimes \xi)
= y \otimes 1.\]
Now
\[ \hat T\fk := T\fk \rtimes_D \R \]
also is a reductive Lie superalgebra and $(\hat T\fk)_\oo \cong \fk \oplus \R$ contains a
subspace commuting with $\fk$ which is not central.

In the terminology from above, we then have $\fb = \R (0,1)$,
$\fa = \fk \otimes \xi$, and $J = J_a$ is a one-element set because $[\fa,\fa] = \{0\}$.
\end{ex}

\begin{prop} Let $k \in J_a$.
Then $\fc(k) \trile \g$ is an ideal with the following properties:
\begin{itemize}
\item[\rm(i)] $\fk_k := [\fb,\fa_k]$ is a simple Lie algebra.
\item[\rm(ii)] $\fc(k)/\fz(\fc(k)) \cong T \fk_k$.
\item[\rm(iii)] $\fc(k)$ contains the nilpotent ideal $\fa_k + [\fa_k,\fa_k]$,
hence is not semisimple.
\item[\rm(iv)] Let $b \in \fb$ with $[b,\fa_k] \not=\{0\}$.
Then $\hat\fc(k) := \fc(k) + \K b+ \K[b,b]$ is a subalgebra of $\g$
for which $\hat\fc(k)/\fz(\hat\fc(k)) \cong \hat T\fk_k$ is semisimple.
\end{itemize}
\end{prop}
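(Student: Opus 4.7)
The plan is to prove (iii) as a direct bracket calculation, then establish (i) and (ii) together through a module-theoretic analysis of $\fk_k$ as a $\fk_k$-module, and finally assemble (iv) from the same ingredients.

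For (iii), since $k \in J_a$ forces $[[\fa_k,\fa_k],\fa_k]=\{0\}$, Lemma~\ref{teh4} places $[\fa_k,\fa_k]$ in $\fz(\g)$. Hence $[\fa_k,\fa_k]$ is central in $\fc(k)$ and $[\fk_k,\fa_k] = \fa_k$ keeps $\fa_k + [\fa_k,\fa_k]$ stable under brackets from $\fc(k)$. The square $[\fa_k+[\fa_k,\fa_k],\ \fa_k+[\fa_k,\fa_k]] = [\fa_k,\fa_k] \subseteq \fz(\g)$ is killed by one more bracket, so the ideal is two-step nilpotent, nonzero, and obstructs semisimplicity of $\fc(k)$.

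The crucial input for (i) and (ii) is that $\fb = \fz_{\g_\oo}(\g_\oe)$ is fixed pointwise by $\g_\oe$, so $[\fk_k,\fb] = 0$. Combined with $[\fb,\fb]\subseteq\fz(\g)$ and super-Jacobi this turns the bilinear map $\mu\colon \fb \otimes \fa_k \to \fk_k$, $(b,y)\mapsto [b,y]$, into a $\fk_k$-equivariant surjection, where $\fb$ carries the trivial and $\fk_k$ the adjoint action. Next I verify that the kernel $K$ of the $\g_\oe$-action on $\fa_k$ contains $[\fb,\fb]$, every $[\fa_j,\fa_l]$ (since $k\in J_a$ gives $[[\fa,\fa],\fa_k]=\{0\}$) and $[\fb,\fa_j]$ for $j\neq k$ (a short super-Jacobi argument using $[\fa_j,\fa_k]=0$ from Lemma~\ref{teh6}(ii) and $[\fa_j,\fk_k]\subseteq\fa_j\cap\fa_k=\{0\}$); hence $K + \fk_k = \g_\oe$, and the simple $\g_\oe$-module $\fa_k$ is in fact a simple $\fk_k$-module. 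Since $\fb\otimes\fa_k \cong \fa_k^{\dim\fb}$ is isotypic of type $\fa_k$, so is the quotient $\fk_k$ under the adjoint action. Decomposing the reductive $\fk_k = \fz(\fk_k)\oplus\bigoplus_i \fs_i$ and using that $\fa_k$ is a \emph{nontrivial} simple ($[\fk_k,\fa_k]=\fa_k$), the trivial module $\fz(\fk_k)$ must vanish while the pairwise nonisomorphism of the distinct $\fs_i$ as $\fk_k$-modules permits at most one simple summand; thus $\fk_k$ is simple and $\fk_k \cong \fa_k$ as $\fk_k$-modules, which is (i). For (ii), simplicity of $\fk_k$ gives $[\fa_k,\fa_k]\cap\fk_k=\{0\}$; faithfulness of $\fk_k$ on $\fa_k$ and the absence of $\fk_k$-invariants in the nontrivial simple $\fa_k$ yield $\fz(\fc(k))=[\fa_k,\fa_k]$; and the quotient $\fc(k)/\fz(\fc(k))$ then has even part $\fk_k$, odd part $\fa_k\cong\fk_k$ under the adjoint action, and vanishing odd-odd bracket, matching $T\fk_k$ precisely.

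For (iv), closure of $\hat\fc(k) = \fc(k) + \K b + \K[b,b]$ follows from $[b,\fk_k]\subseteq[\fb,\fk_k]=0$, $[b,[\fa_k,\fa_k]]=0$, and $[b,b]\in\fz(\g)$. With $b$ chosen so that $[b,\fa_k]\neq 0$, the map $\phi_b:=\ad(b)|_{\fa_k}\colon\fa_k\to\fk_k$ is a nonzero $\fk_k$-equivariant map between the isomorphic simple $\fk_k$-modules $\fa_k$ and $\fk_k$, hence an isomorphism by Schur's lemma (valid over any characteristic-zero field since $\End_{\fk_k}$ of a simple is a division algebra). A centre computation analogous to the one for $\fc(k)$, with the extra constraint from bracketing with $b$, yields $\fz(\hat\fc(k)) = [\fa_k,\fa_k] + \K[b,b]$; and in the quotient the identities $[\bar b,\fk_k]=0$, $[\bar b,\fa_k]=\phi_b(\fa_k)=\fk_k$, $[\bar b,\bar b]=0$ identify it with $\hat T\fk_k = T\fk_k \rtimes_D \R$, where $\bar b$ plays the role of the odd derivation $D$. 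Semisimplicity of $\hat T\fk_k$ follows by a short ideal enumeration: any nonzero ideal meets $\fk_k$ via $\ad(d)\colon \fk_k\xi \to \fk_k$, must then contain $T\fk_k$, and the simple $\fk_k \subseteq T\fk_k$ rules out solvability. The main obstacle throughout is the isotypic-component step pinning down $\fk_k$ as simple over a field that need not be algebraically closed; the clean identities $[\fk_k,\fb]=0$ and $K + \fk_k = \g_\oe$, both consequences of the $J_a$-hypothesis, are precisely what make the argument go through in this generality.
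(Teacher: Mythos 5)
Your proposal is correct and follows essentially the same line of argument as the paper: centrality of $[\fa_k,\fa_k]$ from Lemma~\ref{teh4}, the observation that $\ad b|_{\fa_k}\colon\fa_k\to\fk_k$ is $\fk_k$-equivariant because $\fb$ centralizes $\g_\oe$, the isotypic argument forcing $\fk_k$ to be simple, and the identification of the quotients with $T\fk_k$ and $\hat T\fk_k$. The only substantive omission is that you never verify the opening claim of the proposition, namely that $\fc(k)\trile\g$; the paper does this explicitly by computing $[\fc(k),\g_\oo]\subeq\fc(k)$ using Lemma~\ref{teh6}(ii), and your (iv) quietly relies on it. Your semisimplicity argument for $\hat T\fk_k$ (full ideal enumeration) is a harmless variant of the paper's (show every solvable ideal vanishes), and your remark that Schur's lemma still applies over a non-algebraically-closed field is a useful clarification the paper leaves implicit.
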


\proof (i), (ii)
Lemma~\ref{teh6}(ii) implies that $[\fa_k,\fa_j] = \{0\}$ for $j \not=k$.
By \eqref{miss2} and \eqref{m12-4c},
the centralizer of $\fa_k$ in $\g_\oe$ contains
\[ [\fb,\fb], \quad [\fa,\fa], \quad [\fb, \fa_j], j \not=k.\]
We conclude that $\g_\oe = \fz_{\g_\oe}(\fa_k) + [\fb, \fa_k]$, so that
$\fa_k$ is also a simple module of the ideal $\fk := [\fb,\fa_k]$ of the reductive
Lie algebra $\g_\oe$ and that $\fk$ has an ideal complement acting trivially on $\fa_k$.

For every $b \in \fb$, the operator $\ad b \: \fa_k \to \fk$ is $\fk$-equivariant.
Therefore the adjoint representation of $\fk$ is a sum of simple modules
isomorphic to $\fa_k$, hence semisimple and a direct sum of submodules isomorphic
to $\fa_k$. In particular, $\fk$ is semisimple, hence a direct sum of
simple ideals $\fk_1, \ldots, \fk_n$. Then the ideals $\fk_j$ are
simple $\fk$-modules with respect to the adjoint action and for
$i \not=j$ they are non-isomorphic because $\fk_i$ acts trivially on $\fk_j$
and vice versa. This implies that $n = 1$ and that $\fk$ is a simple
Lie algebra. Using Lemma~\ref{teh4}(ii), we see that
$\fc(k)$ is a central extension of the Lie superalgebra
\[ \fc(k)/\fz(\fc(k)) \cong  \fk + \fa_k \]
which is isomorphic to the Lie superalgebra $T\fk$ from Example~\ref{ex:tk}.

Note that $\fk$ annihilates a  complement of $\fa_k$ in $\g_\oo$,
$[\fa_k,\fa_k]$ is central, and $[\fa_k, \g_\oo] = [\fa_k,\fa_k]+ [\fa_k,\fb]$
by Lemma~\ref{teh6}(ii). We conclude that
\[ [\fc(k),\g_\oo] = [\fa_k, \g_\oo] + [\fk, \g_\oo] \subeq \fc(k),\]
and therefore $\fc(k) \trile \g$ is an ideal.

(iii) follows from $[\fa_k,\fa_k] \subeq \fz(\g)$ (Lemma~\ref{teh4}).

(iv) Let $\fk := \fk_k$. Since $\fc(k)$ is an ideal of $\g$, the subspace
$\hat\fc(k)$ is a subalgebra. Now it follows from Lemma \ref{teh4}, (\ref{miss2}) and (i) that $\fz(\hat\fc(k)) = [\fa_k,\fa_k]+\K[b,b]$
and $\hat\fc(k)/\fz(\hat\fc(k)) \cong T\fk \rtimes_D \fk$ for a derivation
mapping $D \in \der(T\fk)$ with $D^2 =0$ mapping
$\fa_k$ in a $\fk$-equivariant fashion to $\fk$. This implies that
it is isomorphic to $\hat T\fk$. To see that $\hat T\fk$ is semisimple,
let $\fm \trile \hat T\fk$ be a solvable ideal. Then
$\fm_\oe$ must be an abelian ideal of $(\hat T\fk)_\oe \cong \fk$, hence
trivial. Therefore $\fm \subeq (\hat T\fk)_\oo$ and the ideal property
implies $[b,\fm] = \{0\}$, which leads to $\fm \subeq \K b$.
As $[b,\fa_k] \not=\{0\}$, we obtain $\fm = \{0\}$.
\qed

In any Lie superalgebra $\g$ we have
$[\g,\g]_\oo = [\g_\oe, \g_\oo]$. If, in addition, $\g$ is reductive, then
the $\g_\oe$-module $\g_\oo$ is semisimple, so that
$\fz(\g)_\oo$ intersects $[\g,\g]$ trivially.
Therefore $\fz(\g)_\oo$ is a direct summand and
it does no harm to assume that $\fz(\g) \subeq \g_\oo$.

\begin{thm}{\rm(Structure Theorem for reductive Lie superalgebras)}
\mlabel{thm:3.8}
Let $\g$ be a reductive Lie superalgebra generated by $\g_\oo$
with $\fz(\g) \subeq \g_\oe$.
Then $\g \cong [\g,\g] + \fb$ is a direct sum of vector spaces
and the commutator algebra can be written as
a sum of ideals of $\g$ as follows:
\[ [\g,\g]  = \fz(\g) +  \sum_{j \in J_s} \g(j) +  \sum_{k \in J_a} \fc(k). \]
Moreover, the summation map
\[ S \: \bigoplus_{j \in J_s} \g(j) \oplus  \bigoplus_{k \in J_a}  \fc(k) \to \g \]
has central kernel.
\end{thm}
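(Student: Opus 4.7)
The argument proceeds in three steps, one for each assertion.

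\emph{Vector space decomposition $\g = [\g,\g] \oplus \fb$.} In any Lie superalgebra $[\g,\g]_\oo = [\g_\oe,\g_\oo]$, and the standing hypothesis $\g_\oe = [\g_\oo,\g_\oo]$ gives $[\g,\g]_\oe = \g_\oe$. The $\g_\oe$-module decomposition $\g_\oo = \fb \oplus \fa = \fb \oplus \bigoplus_{i \in J}\fa_i$ separates the trivial isotype $\fb$ from the sum of non-trivial simples, so $[\g_\oe,\g_\oo] = \fa$. Thus $[\g,\g] = \g_\oe + \fa$, which yields the desired direct-sum decomposition.

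\emph{Ideal sum for $[\g,\g]$.} The identity $\g = \fb + [\fb,\fb] + \sum_{k \in J_s}\g(k) + \sum_{k \in J_a}\fc(k)$ derived just above the theorem, combined with the first part and the inclusion $[\fb,\fb] \subeq \fz(\g)$ from \eqref{miss2}, yields the claimed sum expression for $[\g,\g]$. The reverse inclusion is immediate since each $\g(j)$ and $\fc(k)$ lies in $[\g,\g]$: their odd parts sit inside $\fa = [\g_\oe,\g_\oo]$ and their even parts are brackets of odd elements.

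\emph{Central kernel.} This is the main content. Let $(x_j)_{j \in J_s} \oplus (y_k)_{k \in J_a} \in \ker S$, with $J_s$ understood to contain one representative per pair $\{j,j'\}$ (since $\g(j) = \g(j')$). First I project the relation $\sum x_j + \sum y_k = 0$ onto the odd part. Since $(x_j)_\oo \subeq \fa_j + \fa_{j'}$ and $(y_k)_\oo \subeq \fa_k$, and these index sets partition $J$, each $\fa_i$-projection involves exactly one summand, forcing every $(x_j)_\oo = 0$ and $(y_k)_\oo = 0$. Hence $x_j \in \g(j)_\oe$ and $y_k \in \fc(k)_\oe$.

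Next I bracket the remaining relation $\sum x_j + \sum y_k = 0$ in $\g_\oe$ with an arbitrary odd element $v \in \fa_i$ and project the outcome into $\fa_i$. Using Lemma~\ref{teh6}(ii) together with \eqref{m12-4b} and \eqref{m12-4c}, one verifies that $\g(j)_\oe$ acts trivially on $\fa_i$ for $i \notin \{j,j'\}$ and $\fc(k)_\oe$ acts trivially on $\fa_i$ for $i \ne k$, so the $\fa_i$-projections separate to give $[x_j,\g(j)_\oo] = \{0\}$ and $[y_k,\fc(k)_\oo] = \{0\}$. For $\g(j)$, the identity $\g(j)_\oe = [\g(j)_\oo,\g(j)_\oo]$ combined with the super-Jacobi identity upgrades this to $x_j \in \fz(\g(j)) \subeq \fz(\g)$ by Proposition~\ref{prop:3.5}(ii). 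For $\fc(k)$, the centralizer argument uses that $\fa_k$ is a faithful module for the simple Lie algebra $\fk_k = [\fb,\fa_k]$ (since the action kernel is an ideal of $\fk_k$ that cannot annihilate $\fa_k = [\fk_k,\fa_k]$), so the $\fk_k$-component of $y_k$ vanishes, leaving $y_k \in [\fa_k,\fa_k] \subeq \fz(\g)$ by Lemma~\ref{teh4}. The main technical hurdle is the careful bookkeeping of cross-actions between different $\g(j)$'s and $\fc(k)$'s, which rests on the intersection identities collected in Lemma~\ref{teh6}.
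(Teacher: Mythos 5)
Your proof is correct and follows essentially the same strategy as the paper's: first identify $[\g,\g] = \g_\oe \oplus \fa$ from the isotype decomposition of $\g_\oo$, then use the computation preceding the theorem together with \eqref{miss2} for the ideal-sum expression, and finally reduce the central-kernel claim to showing that each component of a kernel tuple is even (by directness of $\fa = \bigoplus_i \fa_i$) and then central.

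The one place where you diverge from the paper is in the final step. For $\g(j)$ you pass through Proposition~\ref{prop:3.5}(ii), and for $\fc(k)$ you decompose $\fc(k)_\oe = [\fa_k,\fa_k] \oplus \fk_k$ and invoke faithfulness of the simple $\fk_k$-action on $\fa_k$. The paper instead argues uniformly: having shown each even component annihilates the relevant $\fa_i$'s, it notes that the component also annihilates the complementary $\fa_l$'s (by Lemma~\ref{teh6}(ii), \eqref{m12-4b}, \eqref{m12-4c}) and annihilates $\fb$ (being even), hence commutes with all of $\g_\oo$, and is therefore central because $\g_\oo$ generates $\g$. The paper's route is a little shorter and avoids the explicit direct-sum decomposition of $\fc(k)_\oe$, but both are valid. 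Two cosmetic points: your citation of Lemma~\ref{teh6}(ii) in the ``acts trivially on $\fa_i$'' step should really be \eqref{m12-4} (Lemma~\ref{teh6}(ii) concerns $[\fa_k,\fa_s]$, not $[[\fa_k,\fa_{k'}],\fa_s]$), and the directness $[\fa_k,\fa_k]\cap\fk_k=\{0\}$ (which your ``$\fk_k$-component'' tacitly uses) deserves a one-line justification via $[\fa_k,\fa_k]\subeq\fz(\g)$ and $\fz(\fk_k)=\{0\}$.
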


\proof It is clear that the ideals
$\g(j)$ and $\fc(k)$ are contained in the commutator algebra
$[\g,\g]$ and $[\g,\g]_\oo = \fa$, as well as
\[ [\g,\g]_\oe = \g_\oe = [\g_\oo, \g_\oo]
= [\fa,\fa] + [\fa,\fb] +[\fb,\fb].\]
As $[\fb,\fb]$ is central,
$[\fb,\fa_j] \subeq \g(j)$ for $j \in J_s$
(Lemma~\ref{teh6}(i)),
$[\fb,\fa_k] \subeq \fc(k)$ for $k \in J_a$,
and
\[ [\fa,\fa] \subeq \fz(\g) + \sum_{j \in J_s} [\fa_j, \fa_{j'}]\]
(Lemma~\ref{teh4} and Lemma~\ref{teh6}(ii)),
we thus obtain
\[ [\g,\g]  = \fz(\g)_\oe +  \sum_{j \in J_s} \g(j) +  \sum_{k \in J_a} \fc(k). \]

Now let $x_k \in \fc(k)$ and $y_j \in \g(j)$.
If $j \not=j'$, we assume that either $y_j$ or $y_{j'}$ vanishes.
Suppose that
$z := \sum_{k \in J_a} x_k + \sum_{j \in J_s} y_j \in \fz(\g)$.
Since the sum of the subspaces $\fa_j$ and $\fa_k$ of $\fa$ is direct,
all summands $x_k$ and $y_j$ are even and so they commute with $\fb$.
For $k \in J_a$, the element $z - x_k$ commutes with $\fa_k$, so that
$x_k$ does as well. As it also acts trivial on a complement of $\fa_k$ in $\fa$,
it commutes with $\g_\oo$, and thus $x_k \in \fz(\g)$ because $\g$ is generated by
$\g_\oo$. Likewise $z - y_j$ acts trivially on $\fa_j + \fa_{j'}$, hence
$y_j$ does likewise and with the same argument as before we obtain $y_j \in \fz(\g)$.
\qed

\begin{rem}\mlabel{new-late} Under the assumption that
$\g_\oe = [\g_\oo, \g_\oo]$, the Lie superalgebra $\g$ is perfect {(that is $\g=[\g,\g]$)} if and only if
$\g_\oo = [\g_\oe, \g_\oo] = \fa$, i.e., if $\fb = \{0\}$.
If this is the case, then $J = J_s$, so that $\g$ is a central quotient
of a direct sum of the ideals $\g(j)$, $j \in J_s$.
\end{rem}

\begin{rem} \mlabel{rem:a}
For the even part, we obtain with Lemma~\ref{teh6}
\[ [\g,\g]_\oe  = \fz(\g)_\oe +  \sum_{j \in J_s} [\fa_j, \fa_{j'}]
+  \sum_{k \in J_a} [\fb, \fa_k]. \]
Here the ideals $[\fb, \fa_k]$ of $\g_\oe$
are simple and act non-trivially only on the
subspace $\fa_k$ of $\fa$, whereas the ideals
$[\fa_j, \fa_{j'}]$ are reductive and act non-trivially only on the subspace
$\fa_j + \fa_{j'}$ of $\fa$.
\end{rem}

\begin{lem} Let $\g$ be a reductive Lie superalgebra with
$\g_\oe = [\g_\oo, \g_\oo]$.
Let $\fz := \fz_\fb(\fa)$ be the centralizer
of $\fa$ in $\fb$ and $\fb_r\subeq \fb$ a complementary subspace for $\fz$.
Then $\g_r := [\g,\g] + \fb_r$ is a Lie superalgebra and
$\g/\z(\g)_\oe \cong \g_r/\fz(\g)_\oe \oplus \oline\fz$
is a direct sum of Lie superalgebras, where the image $\oline\fz$ of
$\fz$ in $\g/\z(\g)_\oe$ is a central ideal and $\g_r/\fz(\g)_\oe$ is semisimple.
\end{lem}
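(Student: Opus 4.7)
The plan is to establish, in order, (i) that $\g_r$ is a sub-superalgebra of $\g$, (ii) the vector space decomposition $\g/\fz(\g)_\oe = \g_r/\fz(\g)_\oe \oplus \oline\fz$, (iii) that $\oline\fz$ is an abelian central ideal, so the decomposition is in fact one of Lie superalgebras, and (iv) that $\g_r/\fz(\g)_\oe$ is semisimple. The first three items are essentially bookkeeping; the main obstacle will be (iv), which I would attack by analyzing an arbitrary solvable ideal of $\g_r$ using the structural information from Theorem~\ref{thm:3.8} and Proposition~\ref{prop:3.5}.

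Item (i) is immediate since $\g_r\subeq\g$ gives $[\g_r,\g_r]\subeq[\g,\g]\subeq\g_r$. For (ii), I would first observe that $[\g,\g]_\oo=[\g_\oe,\g_\oo]=\fa$, since $\fb$ is a trivial $\g_\oe$-module while each $\fa_j$ is a non-trivial simple one; hence $\g=[\g,\g]\oplus\fb$ as vector spaces, which in turn gives $\g=\g_r\oplus\fz$. Passing to the quotient by $\fz(\g)_\oe\subeq(\g_r)_\oe$, which is disjoint from the purely odd subspace $\fz$, yields the claimed vector space decomposition. For (iii), it suffices to verify $[\fz,\g]\subeq\fz(\g)_\oe$: the brackets $[\fz,\g_\oe]=\{0\}$ and $[\fz,\fa]=\{0\}$ hold by definition of $\fz=\fz_\fb(\fa)$, and $[\fz,\fb]\subeq[\fb,\fb]\subeq\fz(\g)_\oe$ by (\ref{miss2}). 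In particular $\oline\fz$ is abelian and central in $\g/\fz(\g)_\oe$, promoting the decomposition to one of Lie superalgebras.

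The main step is (iv). Let $\fm\trile\g_r$ be a solvable ideal; I will show $\fm\subeq\fz(\g)_\oe$. Using $(\fm_\oe)^{(k)}\subeq(\fm^{(k)})_\oe$, the even part $\fm_\oe$ is solvable as a Lie algebra, so as a solvable ideal of the reductive algebra $\g_\oe$ it must satisfy $\fm_\oe\subeq\fz(\g_\oe)$. Decomposing the $\g_\oe$-module $(\g_r)_\oo=\fa\oplus\fb_r$ into its isotypic pieces, we may write $\fm_\oo=\bigl(\bigoplus_{j\in S}\fa_j\bigr)\oplus\fm'$ with $S\subeq J$ and $\fm'\subeq\fb_r$. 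The heart of the argument is to show $S=\eset$ and $\fm'=\{0\}$. For the first claim: if $j\in J_s$ and $\fa_j\subeq\fm$, the intersection $\fm\cap\g(j)$ is a proper solvable ideal of $\g(j)$, which by Proposition~\ref{prop:3.5} must lie in $\fz(\g(j))\subeq\g_\oe$ --- contradicting that $\fa_j$ is odd. If $k\in J_a$ and $\fa_k\subeq\fm$, the ideal property yields $\fk_k=[\fb_r,\fa_k]\subeq\fm_\oe\subeq\fz(\g_\oe)$, contradicting that $\fk_k$ is a simple non-abelian ideal of $\g_\oe$. For the second claim: for any $b\in\fm'$ and $a\in\fa_k$, the containment $[b,a]\in\fm_\oe\subeq\fz(\g_\oe)$, combined with $[b,a]\in\fk_k$ (when $k\in J_a$) or $[b,a]\in[\fa_k,\fa_{k'}]\cap[\g_\oe,\g_\oe]$ (when $k\in J_s$, by Lemma~\ref{teh6}(i)), forces $[b,a]=0$, because $\fz(\g_\oe)$ meets neither the simple non-abelian ideal $\fk_k$ nor the derived algebra $[\g_\oe,\g_\oe]$ of the reductive $\g_\oe$ non-trivially. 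Hence $b\in\fz_\fb(\fa)=\fz$, so $b\in\fb_r\cap\fz=\{0\}$. Thus $\fm_\oo=\{0\}$, and then $[\fm_\oe,\g_\oo]\subeq\fm_\oo=\{0\}$ gives $\fm_\oe\subeq\fz(\g)_\oe$, completing the proof.
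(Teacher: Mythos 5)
Your overall plan coincides with the paper's: set up the vector space splitting $\g=[\g,\g]\oplus\fb$, verify $\oline\fz$ is a central ideal in the quotient, and show semisimplicity by taking a solvable ideal $\fm\trile\g_r$, forcing $\fm_\oe\subeq\fz(\g_\oe)$, eliminating the odd part by reducing to the structure of the ideals $\g(j)$ and $\fc(k)$, and then concluding $\fm\subeq\fz(\g)_\oe$. Items (i)--(iii) and the treatment of $\fm_\oe$ are fine.

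There is, however, a genuine gap in step (iv): you assert that $\fm_\oo=\bigl(\bigoplus_{j\in S}\fa_j\bigr)\oplus\fm'$ with $S\subeq J$, i.e.\ that a $\g_\oe$-submodule of $\fa\oplus\fb_r$ is a sum of some of the \emph{chosen} summands $\fa_j$ together with a piece of $\fb_r$. This requires that the $\fa_j$ are pairwise non-isomorphic as $\g_\oe$-modules, which is not established. Remark~\ref{rem:a} only shows that the \emph{blocks} $\fb_r$, $\fa_k$ $(k\in J_a)$, and $\fa_j+\fa_{j'}$ $(j\in J_s)$ share no common simple constituent; within a block $\fa_j+\fa_{j'}$ with $j\not=j'$ it is not ruled out that $\fa_j\cong\fa_{j'}$, in which case $\fm_\oo$ could meet $\fa_j\oplus\fa_{j'}$ in a ``diagonal'' simple submodule that is neither $\fa_j$ nor $\fa_{j'}$. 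Your case analysis (``if $\fa_j\subeq\fm$\dots'') then misses that configuration. The paper avoids this by using only the coarse ``adapted'' decomposition and observing that $\fm\cap\g(j)$ is a solvable ideal of $\g(j)$, hence lies in $\fz(\g(j))\subeq\g_\oe$ by Proposition~\ref{prop:3.5}(iii), so $\fm_\oo\cap(\fa_j+\fa_{j'})=\{0\}$ with no hypothesis on how $\fm_\oo$ sits inside $\fa_j\oplus\fa_{j'}$. Since you already invoke exactly this mechanism in the case $\fa_j\subeq\fm$, the repair is to drop the finer decomposition claim and replace the $J_s$-branch of the argument with: for each $j\in J_s$, the intersection $\fm\cap\g(j)$ is a solvable ideal of $\g(j)$, hence central, hence even, so $\fm_\oo\cap(\fa_j+\fa_{j'})=\{0\}$. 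With that change the rest of your argument (the $J_a$ case, the analysis of $\fm'\subeq\fb_r$, and the final passage to $\fm_\oe\subeq\fz(\g)_\oe$) goes through.
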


\proof The image $\oline\fz$ of $\fz$ in $\g/\fz(\g)_\oe$ is an abelian
ideal because $[\fz,\g_\oo] = [\fz,\fb] \subeq \fz(\g)_\oe$
and $[\g_\oe, \fz] = \{0\}$. We conclude that $\g/\fz(\g)_\oe$ is a
direct sum of $\g_r/\fz(\g)_\oe$ and $\oline\fz$.

It remains to show that $\g_r/\fz(\g)_\oe$ is semisimple, i.e.,
that every solvable ideal $\fr \trile \g_r$ is contained in $\fz(\g)_\oe$.
First we observe that $\fr_\oe$ is a solvable ideal of the reductive Lie algebra
$\g_\oe$, hence central in $\g_\oe$.
Since the subspaces $\fa_k$, $k \in J_a$, and
$\fa_j + \fa_{j'}$, $j \in J_s$, are pairwise non-equivalent $\g_\oe$-modules
by Remark~\ref{rem:a}, the subspace $\fr_\oo \subeq \g_{r,\oo}$ is adapted to the
decomposition $\g_{r,\oo} =
\fb_r + \sum_{k \in J_a} \fa_k + \sum_{j \in J_s} \fa_j + \fa_{j'}$.
If $k \in J_a$ and $\fr_\oo \cap \fa_k \not=\{0\}$,
then $\fa_k \subeq \fr$, and the relation $[\fb_r, \fa_k] = [\fb,\fa_k] \subeq \fr$
contradicts the solvability of $\fr$.
If $j \in J_s$,
then $\fr \cap \g(j)$ is a solvable ideal of $\g(j)$, hence central
(Proposition~\ref{prop:3.5}(iii)),
so that $\fr \cap (\fa_j + \fa_{j'}) = \{0\}$. This implies that
$\fr_\oo \subeq \fb_r$.
Now let $b \in \fr_\oo$.
If $[b,\fa_k] \not=\{0\}$ for some
$k \in J_a$, the simplicity of the ideal $[b,\fa_k]$ of $\g_\oe$
{contradicts} $\fr_\oe \subeq \fz(\g_\oe)$.
If $[b,\fa_j] \not=\{0\}$ for some
$j \in J_s$, then
$[b,\fa_j] \subeq [\g_\oe, \g_\oe]$
(Lemma~\ref{teh6}(i)) shows that $[b,\fa_j]$ is semisimple,
which also contradicts the solvability of $\fr$.
We conclude that $[\fr_\oo, \fa] = \{0\}$. Hence
$\fr_\oo \subeq \fz \cap \fb_r = \{0\}$ and thus
$\fr = \fr_\oe \subeq \g_\oe$. As $\fr$ is an ideal, we derive that
$[\fr,\g_\oo] \subeq \fr \cap \g_\oo = \{0\}$, and since $\g_\oo$ generates
$\g$, {we see that $\fr=\fr_{\bar0}$ is central in $\g$, so $\fr\subeq\fz(\g)_{\bar0}$.}
We conclude that every solvable ideal in $\g_r/\fz(\g)_\oe$ is trivial.
\qed

\begin{rem} Let $\Lambda_n(\K)$ be the Gra\ss{}mann algebra of degree $n$ over
$\K$ and ${\mathbf W}_n(\K) := \der(\Lambda_n(\K))$ for the Lie superalgebra of
its derivations.
For a semisimple Lie superalgebra $\fs$, a slight modification
of the arguments in \cite{Ch95} shows that there exist
real simple Lie superalgebras $\fs_1,\ldots,\fs_k$
and nonnegative integers $n_1,\ldots,n_k$ such that
\[
\displaystyle
\bigoplus_{i=1}^k\big(\fs_i\otimes_{\mathbb K_i}\Lambda_{n_i}({\mathbb K_i})\big)
\subseteq
\fs
\subseteq
\bigoplus_{i=1}^k
\big(\der_{\mathbb K_i}(\fs_i)
\otimes_{\mathbb K_i}
\Lambda_{n_i}({\mathbb K_i}) +
\mathbb L_i\otimes_{\mathbb K_i} \mathbf W_{n_i}({\mathbb K_i})
\big)
\]
where, for $1 \leq i \le k$,
$\mathbb L_i$ is the centroid of $\fs_i$ and
$\mathbb K_i\subeq \bL_i$ is the subfield killed by $\der(\fs_i)$.

If, in addition, $\fs$ is reductive, then the Lie algebras
$(\fs_i)_\oe \otimes \Lambda_{n_i}({\mathbb K_i})_\oe$ must be reductive,
which immediately implies that $n_i \leq 1$ (cf.\ \cite{El96}).
The information contained in our Structure Theorem~\ref{thm:3.8}
is much finer because we obtain
\[ \g_r/\z(\g)_\oe = \g_r/\z(\g_r) \cong
\bigoplus_{j \in J_s} \g(j)/\fz(\g(j)) \oplus  \bigoplus_{k \in J_a}  T\fk_k =
\bigoplus_{j \in J_s} \g(j)/\fz(\g(j)) \oplus  \bigoplus_{k \in J_a}  \fk_k
\otimes \Lambda_1(\K), \]
where $\g(j)/\z(\g(j))$ is simple.
\end{rem}

\section{The structure of  unitary Lie superalgebras}
\mlabel{sec:4}

We now show how the decomposition derived in the previous section
can be refined to unitary Lie superalgebras.
The following lemma helps us to understand, for a compact
Lie superalgebra, the structure of $\g(j)$
in terms of the simple adjoint quotient (Proposition~\ref{prop:3.5}).

\begin{lem} \mlabel{lem:centext}
Let $\fk$ be a compact simple Lie superalgebra
and $\hat\fk$ be a central extension
with $\fz(\hat\fk) \subeq [\hat\fk,\hat\fk]$.
Then $\dim \fz(\hat\fk) \leq 1$.

More specifically, for
{$\fk=\su(n|m;\C)$}, $n \not=m$, and $\fc(n)$, all central
extensions are trivial,  for $\fk = \psu(n|n;\C)$, the Lie superalgebra
$\su(n|n;\C)$ is the universal central extension, and $\fq(n)$
is the universal central extension of $\pq(n)$.
\end{lem}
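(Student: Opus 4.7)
My approach is to reduce the statement to the well-studied cohomology of the classical simple complex Lie superalgebras by complexifying the real central extensions, and then handle each family of compact simple Lie superalgebras in turn.

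For the rigid cases $\fk=\su(n|m;\C)$ with $n\not=m$ and $\fk=\fc(n)$, I would invoke Remark~\ref{rem:1.2}(c) directly: the complexifications $A(n,m)$ and $C(n)$ carry a non-degenerate Cartan--Killing form, so all derivations of $\fk_\C$ are inner and all complex central extensions are trivial. A real central extension of $\fk$ complexifies to one of $\fk_\C$, hence splits; the resulting section shows that the cocycle is a coboundary over $\R$, so combined with the assumption $\fz(\hat\fk)\subseteq[\hat\fk,\hat\fk]$ we conclude $\fz(\hat\fk)=\{0\}$.

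For $\fk=\psu(n|n;\C)$, the candidate universal central extension is $\su(n|n;\C)$, which is one-dimensional over $\psu(n|n;\C)$ and whose central element lies in the derived algebra by the identity
\[ i\1 = \tfrac{1}{2}[X,X], \qquad X=\pmat{0 & \1 \\ i\1 & 0},\]
from Example~\ref{ex:1.3}(b). The universality, and thus the bound $\dim\fz(\hat\fk)\le 1$, follows once one knows that $H^2(\psl(n|n;\C),\C)$ is one-dimensional; a concrete non-trivial cocycle is supplied by choosing a linear section of $\fsl(n|n;\C)\onto\psl(n|n;\C)$ and using the invariant supertrace form $(X,Y)\mapsto\str(XY)$ on $\fgl(n|n;\C)$, which is non-degenerate on $\fsl(n|n;\C)$ even though the Killing form vanishes. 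Analogously, for $\fk=\pq(n)$ the candidate is $\fq(n)$: Example~\ref{ex:1.3}(c) gives $\fq(n)=[\fq(n),\fq(n)]$, so the central element $i\1\in\fz(\fq(n))$ sits in the derived algebra, and universality reduces to the one-dimensionality of $H^2(Q(n),\C)$, with a representative cocycle constructed from the queer trace on $\tilde Q(n)$.

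The main obstacle is establishing the one-dimensionality of $H^2$ for $\psl(n|n;\C)$ and $Q(n)$, since here the Killing form vanishes and the soft argument of Remark~\ref{rem:1.2}(c) is unavailable. I would either cite the classical computation (Kac's classification of outer derivations of classical Lie superalgebras, or the explicit treatment in Iohara--Koga), or reproduce it by analyzing the outer derivation module $\der(\fk_\C)/\ad(\fk_\C)$ via the root space decomposition and the automorphism group, verifying that only a single one-dimensional family of non-trivial central extensions survives. With that cohomological input in hand, the identifications of $\su(n|n;\C)$ and $\fq(n)$ as universal central extensions are immediate from the explicit 1-dimensional extensions already constructed in Example~\ref{ex:1.3}.
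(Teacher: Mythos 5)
Your proposal is logically sound but follows a genuinely different route from the paper for the key bound $\dim \fz(\hat\fk) \leq 1$. The paper gives a uniform, self-contained argument that never needs the second cohomology of $\psl(n|n;\C)$ or $Q(n)$: since $\fz(\hat\fk) \subseteq [\hat\fk,\hat\fk]$ forces $\hat\fk$ to be perfect, the bracket factors through $\fk \otimes \fk \onto \hat\fk$, showing $\hat\fk$ is a semisimple $\fk_\oe$-module; one then picks an $\fk_\oe$-invariant complement of $\fz$, observes that the resulting cocycle vanishes on $\fk_\oe \times \fk$ and hence reduces to an $\fk_\oe$-invariant symmetric bilinear map $\beta \: \fk_\oo \times \fk_\oo \to \fz_\oe$; finally, because $\fk_\oo$ is an irreducible $\fk_\oe$-module carrying an invariant positive definite form (compactness is used precisely here, to diagonalize the intertwiner over $\R$), the space $S^2(\fk_\oo)^{\fk_\oe}$ is one-dimensional, and perfectness of $\hat\fk$ then gives $\dim\fz \leq 1$ immediately. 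Your route instead attacks the cohomology of the complexifications: for the Killing-form-rigid cases $A(n,m)$, $n\not=m$, and $C(n)$ this coincides with the paper's argument via Remark~\ref{rem:1.2}(c); but for $\psl(n|n;\C)$ and $Q(n)$, where the Killing form vanishes, you explicitly acknowledge that you would need to import $\dim H^2 = 1$ from Kac/Iohara--Koga or rederive it via the root system and outer derivations. That is the point you flag as "the main obstacle," and it is indeed a real gap in your write-up as it stands — you defer rather than supply the key computation. The paper's Schur-type argument buys exactly this: it sidesteps the cohomology computation altogether and applies uniformly to all four families of compact simple Lie superalgebras at once, which is why it never has to cite external computations of $H^2$ for the degenerate cases. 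Your approach, once the cited results are in place, would yield somewhat more (an identification of $H^2(\fk_\C,\C)$ for all classical types), but at the cost of self-containedness and uniformity.
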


\proof  Write $\fz := \fz(\hat\fk)
= \fz_\oe \oplus \fz_\oo$. Then $\fz$ acts trivially on $\hat\fk$,
so that the adjoint action of $\hat\fk_\oe$ factors through an action
of $\fk_\oe$. Our assumption that
$\fz(\hat\fk)$ is contained in $[\hat\fk,\hat\fk]$ implies that
$\hat\fk= [\hat\fk,\hat\fk]$, so that the bracket map of
$\hat\fk$ factors through a surjective linear map
\[  \fk \otimes \fk \to \hat\fk.\]
Since $\fk$ is a semisimple $\fk_\oe$-module, so is $\fk \otimes \fk$,
and therefore $\hat\fk$ also is  a semisimple $\fk_\oe$-module.
We conclude that there exists an $\fk_\oe$-invariant
complement of the center $\fz$. We may thus write
\[ \hat\fk = \fz \oplus_\omega \fk \quad \mbox{ with } \quad
[(z,x), (z',x')] = (\omega(x,x'), [x,x']), \]
where $\omega \: \fk \times \fk \to \fz$ is a $2$-cocycle.
Since the corresponding decomposition of $\hat\fk$ is
$\fk_\oe$-invariant, $\omega(\fk_\oe, \fk) = \{0\}$.
Therefore $\omega$ is of the form
\[ \omega(x_0 + x_1, y_0 + y_1) = \beta(x_1, y_1),\]
where $\beta \: \fk_\oo \times \fk_\oo \to \fz_\oe$ is an $\fk_\oe$-invariant
symmetric bilinear map. In particular, the odd component $\beta_\oo$ vanishes.

Since the $\fk_\oe$-module $\fk_\oo$ is simple, the space
$S^2(\fk_\oo)^{\fk_\oe}$ is one-dimensional. In fact,
if $\la \cdot,\cdot\ra$ is an $\fk_\oe$-invariant positive definite
scalar product on $\fk_\oo$, then any other invariant symmetric bilinear
form $\gamma$ can be written as $\gamma(x,y) = \la Dx,y\ra$
for a symmetric $\fk_\oe$-intertwining operator $D$. Then the diagonalizability
of $D$ implies that $D \in \R\1$, so that $\gamma$ is a multiple of the
scalar product, and thus $S^2(\fk_\oo)^{\fk_\oe}$ is one-dimensional.
Now the perfectness of $\hat\fk$ implies that $\dim \fz \leq 1$.

For $\fk \cong \su(n|m;\C)$, $n \not=m$, and $\fc(n)$, all central
extensions are trivial because their complexification has this property
by Remark~\ref{rem:1.2}(c). In view of $\dim \fz \leq 1$,
$\su(n|n;\C)$ must be the universal central extension
of $\psu(n|n;\C)$, and likewise
$\fq(n)$ is the universal central extension of $\pq(n)$.
\qed

\begin{thm} \mlabel{thm:3.7}
If $\g$ is unitary and $j \in J_s$, then the ideal $\g(j)$ is isomorphic to one
of the following Lie superalgebras
\[ \su(n|m;\C), \ n \geq m \geq 1, \quad
\fq(n), \ n\geq 2, \quad \mbox{  or } \quad
\fc(n), \ n \geq 2.\]

In addition, we have:
\begin{itemize}
\item[\rm(a)] If $\g(j) \cong \su(n|m;\C), n \not=m$, or $\g(j) \cong \fc(n)$, then it is a
direct summand of $\g$.
\item[\rm(b)] If $\g(j) \cong \su(n|n;\C)$, then $[\fb,\g(j)] = \{0\}$.
\item[\rm(c)] If $\g(j) \cong \fq(n)$ and $[\fb,\fa_j] \not=\{0\}$, then
we obtain an embedding $\hat\fq(n) \into \g$.
\end{itemize}
\end{thm}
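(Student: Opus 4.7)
The plan is to combine the classification of compact simple Lie superalgebras (Theorem~\ref{thm:1.4}) with the central extension lemma (Lemma~\ref{lem:centext}) to identify $\g(j)$, and then exploit the $\g(j)_\oe$-equivariance of the adjoint action of $\fb$ on $\g(j)_\oo$ to obtain the remaining structural claims.

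First I would argue that since $\g$ is unitary and $\g(j)\trile\g$ is an ideal, $\g(j)$ is itself unitary, hence compact by Lemma~\ref{lem:form}. Proposition~\ref{prop:3.5}(iv) then gives $j=j'$, so $\g(j)=\fa_j+[\fa_j,\fa_j]$, and Proposition~\ref{prop:3.5}(iii) gives that $\pg(j):=\g(j)/\fz(\g(j))$ is a compact classical simple Lie superalgebra. Theorem~\ref{thm:1.4} narrows $\pg(j)$ to $\su(n|m;\C)$ ($n>m\geq 1$), $\psu(n|n;\C)$, $\pq(n)$, or $\fc(n)$. Since $\fz(\g(j))\subeq\g(j)_\oe=[\fa_j,\fa_j]\subeq[\g(j),\g(j)]$, Lemma~\ref{lem:centext} applies: all central extensions are trivial for $\su(n|m;\C)$ with $n\neq m$ and for $\fc(n)$, while $\psu(n|n;\C)$ and $\pq(n)$ have one-dimensional universal central extensions $\su(n|n;\C)$ and $\fq(n)$. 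As $\psu(n|n;\C)$ and $\pq(n)$ are not unitary by Proposition~\ref{prop:simp-nonunit} but $\g(j)$ is, $\g(j)$ must coincide with the non-trivial extension in those cases, producing the stated list.

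For parts (a)--(c), the key observation is that for $b\in\fb$, Lemma~\ref{teh6}(i) (combined with $j=j'$) places $[b,\fa_j]$ inside $[\fa_j,\fa_j]=\g(j)_\oe$, and the map $\ad b\colon\fa_j\to\g(j)_\oe$ is $\g(j)_\oe$-equivariant. Thus $\fb$ acts through a linear map into $\Hom_{\g(j)_\oe}(\g(j)_\oo,\g(j)_\oe)$. Using the module descriptions in Examples~\ref{ex:1.3}, I would check (after complexification) that this Hom space vanishes for $\su(n|m;\C)$, $n\geq m\geq 1$, and for $\fc(n)$, since the simple module $\g(j)_\oo$ does not appear among the irreducible summands of $\g(j)_\oe$; whereas for $\fq(n)$ it is one-dimensional, because $\g(j)_\oo\cong\su(n+1)$ coincides with the semisimple summand of $\g(j)_\oe\cong\fu(n+1)$, realised up to scalar by the exterior derivation $D$ of Example~\ref{ex:1.3}(c). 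This immediately yields $[\fb,\fa_j]=\{0\}$ outside the $\fq(n)$ case, and combined with $[\g_\oe,\fb]=\{0\}$ gives $[\fb,\g(j)]=\{0\}$, which is (b). For (a), $\g(j)$ is simple in the cases $\su(n|m;\C)$, $n>m$, and $\fc(n)$; starting from $[\fb,\g(j)]=\{0\}$ and using Lemma~\ref{teh6}(ii) with iterated Jacobi identities, I would verify that $\g(j)$ centralises the complementary subspace $\g':=\fz(\g)+\fb+\sum_{k\neq j,\,k\in J_s}\g(k)+\sum_{\ell\in J_a}\fc(\ell)$; the Structure Theorem~\ref{thm:3.8} then gives $\g=\g(j)+\g'$, while simplicity forces $\g(j)\cap\g'\subeq\fz(\g(j))=\{0\}$, so that $\g=\g(j)\oplus\g'$ is a direct sum of ideals.

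Finally, for (c), picking $b\in\fb$ with $[b,\fa_j]\neq\{0\}$ and rescaling, I would arrange $\ad b|_{\g(j)}=D$, so that the subalgebra of $\g$ generated by $\g(j)$ and $b$ should realise $\hat\fq(n)\into\g$. The main obstacle here is that $[b,b]\in\fz(\g)_\oe$ must match the central element $[X_0,X_0]=2i\1\in\fz(\fq(n))$ under the chosen isomorphism $\g(j)\cong\fq(n)$, rather than lying in an auxiliary direction of $\fz(\g)_\oe$. I would first use that $[D,D]=0$ as a derivation to place $[b,b]$ in $Z_\g(\g(j))_\oe$, and then correct $b$ by adding a suitable element of $\fb\cap Z_\g(\g(j))_\oo$ (using the positivity of $\kappa_\omega$ from Lemma~\ref{lem:form}(iv) for the freedom to do so) to cancel any component of $[b,b]$ outside $\fz(\g(j))$. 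Once $[b,b]$ lies in $\fz(\g(j))\cong\R i\1$, sending $X_0\mapsto b$ extends to a well-defined Lie superalgebra homomorphism $\hat\fq(n)\to\g$, which is injective because it is non-zero on the centre of $\hat\fq(n)$.
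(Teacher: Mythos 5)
Your proof of the classification of $\g(j)$ and of part (b) follows essentially the same route as the paper: reduce to the compact simple quotient $\pg(j)$ via Proposition~\ref{prop:3.5}, apply Theorem~\ref{thm:1.4} and Lemma~\ref{lem:centext}, and rule out $\psu(n|n;\C)$ and $\pq(n)$ themselves via Proposition~\ref{prop:simp-nonunit}; for (b), both you and the paper reduce the action of $\fb$ to an $\fk_\oe$-equivariant map $\g(j)_\oo\to\g(j)_\oe$ and use that $\Hom_{\fk_\oe}(\fk_\oo,\fk_\oe)=\{0\}$ (the paper packages this through $\der(\fk)$, you do it directly with $\ad b$, but the computation is the same).

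For (a) your approach genuinely diverges and is more laborious. The paper uses the clean observation from Remark~\ref{rem:1.2}(c): all derivations of $\su(n|m;\C)$, $n\neq m$, and of $\fc(n)$ are inner, so in \emph{any} ambient superalgebra such an ideal $\fk$ splits off as $\g\cong\fk\oplus\fz_\g(\fk)$. You instead show $[\fb,\g(j)]=\{0\}$ by the Hom-vanishing argument and then try to verify directly, using Lemma~\ref{teh6}, Remark~\ref{rem:a} and the Structure Theorem~\ref{thm:3.8}, that $\g(j)$ centralises the complementary ideals. That can be made to work (one needs, e.g., $[[\fa_j,\fa_j],[\fa_k,\fa_{k'}]]\subeq \fz(\g)_\oe\cap[\g_\oe,\g_\oe]=\{0\}$ via Lemma~\ref{teh6}(iii) and reductivity of $\g_\oe$), but the paper's derivation-based argument is both shorter and self-contained.

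For (c), you correctly flag a real subtlety that the paper's one-line proof (``therefore leads to an embedding of $\hat\fq(n)\into\g$'') passes over: after rescaling $b$ so that $\ad b|_{\g(j)}$ matches the exterior derivation $D$, the element $[b,b]\in[\fb,\fb]\subeq\fz(\g)_\oe$ need not lie in $\fz(\g(j))\cong\R i\1$, and the homomorphism property $\phi([e,e])=[\phi(e),\phi(e)]$ requires exactly this. However, your proposed repair does not in general give the needed freedom: you want to adjust $b$ by an element of $\fb\cap Z_\g(\g(j))_\oo$, but if $\fb$ is one-dimensional (spanned by a $b$ acting non-trivially on $\fa_j$), this intersection is $\{0\}$ and no correction is available, while $[b,b]$ may still contain a central direction transverse to $\g(j)$. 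So your fix is incomplete; some additional use of the unitarity hypothesis (or a different formulation of what ``embedding $\hat\fq(n)\into\g$'' should deliver) seems to be required here, and this is a gap shared with, rather than introduced relative to, the paper's own terse treatment of (c).
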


\proof We consider the compact simple Lie superalgebra $\fk := \g(j)/\z(\g(j))$
(Proposition~\ref{prop:3.5}(iii)).
In view of Theorem~\ref{thm:1.4}, it is isomorphic to one of the following
\[ \su(n|m;\C), \ n >  m \geq 1, \quad \psu(n|n;\C), \quad
\pq(n), \ n\geq 2, \quad \mbox{  or } \quad
\fc(n), \ n \geq 2.\]
For $\su(n|m;\C), n > m,$ and $\fc(n)$, all central extensions are trivial
by Lemma~\ref{lem:centext}. Since
$\g(j)$ is perfect, we obtain in this case $\fk = \g(j)$.

Lemma~\ref{lem:centext} also
implies that the simple Lie superalgebras $\psu(n|n;\C)$
and $\pq(n)$ have a unique non-trivial
central extension by a one-dimensional center.
As $\fz(\fk_\oe) = \{0\}$ in this case, the Lie superalgebra $\fk$ is not unitary.
Therefore $\g(j)$ must be isomorphic to the unique central extension $\hat\fk$.

(a) Let $\fk$ be a simple compact Lie superalgebra.
If $\fk \cong \su(n|m;\C), n \not=m,$ or $\fk \cong \fc(n)$, then all derivations
of $\fk$ are inner by Remark~\ref{rem:1.2}(c).
This implies that, in any Lie superalgebra $\g$, an ideal
isomorphic to $\fk$ is a direct summand because $\g \cong \fk \oplus \fz_\g(\fk)$.

(b) If $\fk \cong \psu(n|n;\C)$ or $\pq(n)$, then the situation is more complicated.
From Lemma~\ref{lem:centext} we know that $\fk$ has a universal central extension
isomorphic to $\su(n|n;\C)$ or $\fq(n)$, respectively. In particular,
all derivations of $\fk$ lift in a unique fashion to derivations of
$\hat\fk$, so that $\der(\hat\fk) \cong \der(\fk)$.

In $\der(\fk)$ the subspace $\ad\fk \cong \fk$ of inner derivations is an ideal,
and $\ad \fk_\oe$ is compactly embedded in $\der(\fk)$.
We therefore have an $\fk_\oe$-invariant splitting
\[ \der(\fk)_\oo \cong \ad (\fk_\oo) \oplus \fd_\oo,\]
where all derivations in $\fd_\oo$ commute with $\fk_\oe$, i.e.,
$\fk_\oe \subeq \ker D$ for $D \in \fd_\oo$. Since any such $D$ is odd, this implies that
$D^2 = 0$.

Any $D \in \der(\fk)_\oo^{\fk_\oe}$ yields an $\fk_\oe$-intertwining operator
$\fk_\oo \to \fk_\oe$. Conversely, an $\fk_\oe$-intertwining operator
$D \: \fk_\oo \to \fk_\oe$ extends to an odd derivation annihilating $\fk_\oe$ if and only if
\[ [Dx,y] = [x,Dy] \quad \mbox{ for } \quad x,y \in \fk_\oo.\]

For $\fk \cong \psu(n|n;\C)$, the space
$\fk_\oo \cong M_n(\C) \cong \C^n \otimes \oline{\C^n}$ is a simple
$\fk_\oe$-module not isomorphic to any of the two ideals
of $\fk_\oe$. Therefore $\Hom_{\fk_\oe}(\fk_\oo, \fk_\oe) = \{0\}$ implies that
$\fd_\oo = \{0\}$. Accordingly, $[\fb,\g(j)]= \{0\}$ for any ideal
of this type.

(c) For $\fk \cong \pq(n)$, we have
$\fk_\oe \cong \su(n+1,\C)$ and $\fk_\oo \cong \su(n+1,\C)$ with respect to the adjoint
action. As $\su(n+1,\C)_\C \cong \fsl(n+1,\C)$ is a simple complex Lie algebra,
the $\fk_\oe$-module $\fk_\oo$ is absolutely simple, i.e.,
$\End_{\fk_\oe}(\fk_\oo) = \R \1$. This implies that
$\dim \fd_\oo \leq 1$. We have already seen in
Example~\ref{ex:1.3}(c) that a non-zero odd outer derivation exists.
For an ideal of this type, a one-dimensional quotient of $\fb$ may act
non-trivially on $\g(j)$. For $\g(j) \cong \fq(n)$
the relation $[\fb,\fa_j] \not=\{0\}$ therefore leads to an embedding of
$\hat\fq(n) \into \g$  (cf.\ Example~\ref{ex:1.3}(c)).
\qed

\begin{ex} \mlabel{ex:tildetk} Let $\fk$ be a simple compact Lie algebra and
$T\fk = \fk \otimes \Lambda_1$,  be as in Example~\ref{ex:tk}.
We write $\tilde T\fk$ for the central extension of $T\fk$ corresponding
to a positive definite symmetric bilinear form $\kappa$ on
$\fk \cong \fk \otimes \xi$
and $\fz \cong \R$ for the corresponding center. Then
$\fh := \fz + (\fk \otimes \xi)$ is a Clifford--Heisenberg Lie superalgebra and
$\tilde T\fk \cong \fh \rtimes \fk$.
Now the faithful spin representation of $\fh$
(cf.\ Subsection~\ref{subsec:2.1}) also carries a natural
representation of $\fk$ because $\kappa$ is $\fk$-invariant.
Therefore $\tilde T \fk$ is a unitary Lie superalgebra.

If $\fh \to T\fk$ is a non-trivial central extension of Lie superalgebras
with $\fz(\fh) \subeq [\fh,\fh]$,
then the same arguments as in the proof of Lemma~\ref{lem:centext}
imply that $\fh \cong \tilde T\fk$, so that $\tilde T\fk$
is {the} universal central extension of $T\fk$. Since $T\fk$ is not unitary,
it follows that, if $\g$ is unitary and $k \in J_a$, then the ideal
$\fc(k) \trile \g$ is isomorphic to $\tilde T\fk_k$.
\end{ex}

We conclude this paper with the following theorem
which asserts that all possible types of subalgebras $\g(j)$ and $\fc(k)$
actually occur in unitary Lie superalgebras.

\begin{thm} The Lie superalgebras
\[ \su(n|m;\C), \ n \geq m \geq 1, \quad
\fq(n), \ n\geq 2, \quad \mbox{  or } \quad \fc(n), \ n \geq 2,\]
and the Lie superalgebras $\tilde T \fk$, $\fk$ compact simple, are unitary.
\end{thm}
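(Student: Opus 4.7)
The plan is, for each of the four families in the statement, to exhibit an explicit faithful finite-dimensional unitary representation, i.e., an embedding into some $\fu(p|q;\C)$; by Lemma~\ref{n1} this suffices.

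The first three families are essentially immediate from the definitions in Section~\ref{sec:1}. By construction $\su(n|m;\C)$ is a real subalgebra of $\fu(n|m;\C)$, so the inclusion is already a faithful unitary representation on $\C^{n|m}$. Example~\ref{ex:1.3}(c) explicitly realizes $\fq(n)$ inside $\su(n+1|n+1;\C) \subseteq \fu(n+1|n+1;\C)$, and this inclusion supplies the required unitary representation on $\C^{n+1|n+1}$. The Lie superalgebra $\fc(n)$ is the compact real form of $C(n) = \osp(2|2n-2) \subseteq \gl(2|2n-2;\C)$; since its even part $\so(2) \oplus \sp(n-1)$ is compact, a compatible positive definite Hermitian form on $\C^{2|2n-2}$ exists, with respect to which $\fc(n)$ sits inside $\fu(2|2n-2;\C)$, giving the required faithful unitary representation on $\C^{2|2n-2}$.

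The substantive case is $\tilde T\fk$, handled as sketched in Example~\ref{ex:tildetk}. The strategy assembles two ingredients: the Fock (spin) representation $\rho_\fh$ of the Clifford--Heisenberg subalgebra $\fh = \fz \oplus (\fk \otimes \xi)$ on $\cH = \Lambda(V)$ from Subsection~\ref{subsec:2.1}, and a lifted unitary representation $\rho_\fk$ of $\fk$ on $\cH$. Here one chooses $V$ simultaneously as a complex Hilbert space modeling $(\fk \otimes \xi, \kappa)$ and as a faithful finite-dimensional unitary $\fk$-module; since $\kappa$ is $\fk$-invariant, the $\fk$-action on $V$ is by skew-Hermitian operators and lifts naturally to $\cH$ equivariantly with the CAR generators, so $\rho_\fh$ and $\rho_\fk$ combine into a representation of $\tilde T\fk = \fh \rtimes \fk$. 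Faithfulness is then verified piece by piece: the central generator of $\fh$ acts as the nonzero number operator on $\Lambda(V)$, every nonzero odd element $(0,v,-iv) \in \fh$ acts as the nonzero CAR operator $a(v)^* + i\,a(v)$, and $\fk$ acts faithfully on $\cH$ because $V$ was chosen as a faithful $\fk$-module. The only minor technical obstacle is that modeling $\fk \otimes \xi$ directly on a complex Hilbert space requires $\dim_\R \fk$ to be even; when it is odd, one first embeds $(\fk,\kappa)$ isometrically and $\fk$-equivariantly into an even-dimensional real Hilbert space before complexifying, and the argument proceeds as before.
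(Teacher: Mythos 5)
The proofs for $\su(n|m;\C)$, $\fq(n)$, and $\tilde T\fk$ are correct and match the paper's approach (for $\tilde T\fk$ you supply welcome extra detail on the odd-dimensional case, which the paper leaves implicit by appealing to the general spin/CAR construction). The $\fc(n)$ case, however, contains a genuine gap.

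Your argument for $\fc(n)$ is: ``since its even part $\so(2)\oplus\sp(n-1)$ is compact, a compatible positive definite Hermitian form on $\C^{2|2n-2}$ exists, with respect to which $\fc(n)$ sits inside $\fu(2|2n-2;\C)$.'' This is precisely the implication that fails for Lie superalgebras. Averaging over the compact group generated by $e^{\ad\g_\oe}$ produces a $\g_\oe$-invariant Hermitian form on a given module, so the \emph{even} part acts by skew-Hermitian operators; but it gives no control whatsoever over the odd part, which must satisfy the separate algebraic condition $\rho(X)^*=-i\,\rho(X)$. Compactness of $\g_\oe$ does not imply this condition can be arranged: the same reasoning would ``prove'' that $\psu(n|n;\C)$ and $\pq(n)$ are unitary via their defining or adjoint representations, yet Proposition~\ref{prop:simp-nonunit} shows they are not. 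Thus you cannot conclude from compactness alone that the defining $\C^{2|2n-2}$-module of $\fc(n)$ is a unitary representation.

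The paper in fact does not attempt to unitarize the defining representation. Instead it invokes Jakobsen's classification of unitarizable highest weight modules of basic classical Lie superalgebras \cite{Ja94}: one identifies the antilinear antiinvolution $\omega$ on $\fc(n)_\C$ whose fixed real form is $\fc(n)$ (this corresponds to the sign choice $\eps_z=\eps_{p_1}=\eps_{p_2}=1$ in Jakobsen's tables), reads off from \cite[Props.~9.7, 9.8]{Ja94} that nontrivial unitarizable highest weight modules exist, and then uses local finiteness under the compact $\g_\oe$ together with \cite[Prop.~5.2.5]{Ka77} to conclude that such modules are finite dimensional. This produces a faithful finite-dimensional unitary representation, but one that is in general not the natural $(2|2n-2)$-dimensional one. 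To repair your proof you would either have to reproduce this highest-weight analysis, or exhibit an explicit matrix model of $\fc(n)$ inside some $\fu(p|q;\C)$ and verify the odd condition $X^*=-iX$ by hand, neither of which is accomplished by the compactness observation.
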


\proof For $\tilde T\fk$, the unitarity was shown in
Example~\ref{ex:tildetk}, and for
$\su(n|m;\C)$ and $\fq(n)$, the unitarity
follows from the definition. Therefore it remains to show
that $\fc(n)$ is unitary.

To this end we use Jakobsen's classification of unitary highest weight
modules of basic classical Lie superalgebras.
In terms of \cite[Ch.~9]{Ja94} the antilinear antiinvolution
\[ \omega \: \fc(n)_\C \to \fc(n)_\C, \quad
x_0 + x_1 + i(y_0 + y_1) \mapsto
-x_0 -i x_1 -i(-y_0 -i y_1) = -x_0 - y_1 +i(-x_1 + y_0) \]
corresponds to the signs
\[ \eps_z = 1 = \eps_{p_1} = \eps_{p_2}.\]
For the corresponding real form, \cite[Props.~9.7,9.8]{Ja94} represents
a classification of the corresponding unitary highest weight modules.
Since these modules are locally finite for the subalgebra $\g_\oe$
whose real form is compact, \cite[Prop.~5.2.5]{Ka77} implies
that they are finite dimensional. We conclude that
the real Lie superalgebra $\fc(n)$ is unitary.
\qed

{\bf Acknowledgements.} {S. Azam acknowledges the support of IPM (Grant No. 93160221), and the Banach Algebra Center of Excellence for Mathematics.
He would like to thank the people of Department Mathematik, Friedrich-Alexander Universit\"at, for their hospitality during his visit.}

K.-H. Neeb acknowledges the support of DFG-grant NE 413/7-2 in the framework
of the SPP ``Representation Theory''.


\begin{thebibliography}{aaaa}
\bibitem[Be00]{Be00} S. Benayadi, {\it Quadratic Lie superalgebras with the completely
reducible action of the even part on the odd part},
J. Algebra {\bf 223} (2000), 344--366.

\bibitem[BR78]{BR78} F.~A. Berezin, V. Retakh, {\it
The structure of Lie superalgebras with semisimple even part}, Funct. Anal. Appl.
{\bf 12:1} (1978),48--50.

\bibitem[CCTV06]{CCTV06} C. Carmeli, G. Cassinelli, A. Toigo, V. S.
Varadarajan, {\it Unitary representations of super Lie groups and applications to the classification and multiplet structure of super particles},
Comm. Math. Phys. {\bf  263:1} (2006), 217--258.

\bibitem[Ch95]{Ch95} S.-J. Cheng,
\emph{Differentiably simple Lie superalgebras and representations of semisimple
Lie superalgebras},  J. Algebra  {\bf 173:1}  (1995),  1--43.

\bibitem[El96]{El96} A. Elduque, {\it Lie superalgebras with semisimple even part},
J. Algebra {\bf 183} (1996), 649--663.

\bibitem[F13]{F13} R. Fioresi, {\it Compact forms of complex Lie superalgebras},
Preprint, arXiv:math. RT:1205.5705v1.

\bibitem[FG12]{FG12} R. Fioresi,
F. Gavarini, \emph{Chevalley supergroups}, Mem. Amer. Math. Soc. {\bf 215},
Amer. Math. Soc., 2012.

\bibitem[HN12]{HN12} J. Hilgert, K.-H. Neeb,
{\it Structure and Geometry of Lie Groups}, Springer Monographs in Math., Springer, 2012.


\bibitem[Ja94]{Ja94} H.~P. Jakobsen,
\emph{The full set of unitarizable highest weight modules of basic classical Lie superalgebras},  Mem. Amer. Math. Soc.  {\bf 111} (1994),  no. 532, vi+116 pp.

\bibitem[Ka77]{Ka77} V.~G. Kac, \emph{Lie superalgebras,}
Advances in Math. {\bf 26:1}  (1977), 8--96.





\bibitem[NS11]{NS11} K.-H. Neeb, H. Salmasian,
{\it Lie supergroups, unitary representations,
and invariant cones}, in  "Supersymmetry in Mathematics and Physics'', R. Fioresi,
S. Ferrara and V. S. Varadarajan, Eds., Lect. Notes in Math. {\bf 2027} (2011), 195--239.

\bibitem[Ot95]{Ot95} G.~T. Ottesen,
{\it Infinite Dimensional Groups and Algebras in Quantum
Physics}, Springer-Verlag, Lecture Notes in
Physics {\bf m 27}, 1995.

\bibitem[Pa80]{Pa80} M. Parker,
{\it Classification of real simple Lie superalgebras of classical type}, J. Math. Phys. {\bf 21:4} (1980), 689--697.

\bibitem[Va04]{Va04} Varadarajan, V. S. {\it Supersymmetry for Mathematicians: An introduction},
Courant Lecture Notes in Mathematics, 11. New York University, Courant Institute
of Mathematical Sciences, New York; American Mathematical Society,
Providence, RI, 2004.


\end{thebibliography}
\end{document}